\newtheorem{theorem}{Theorem}[subsection]
\newtheorem{remark}[theorem]{Remark}
\newtheorem{proposition}[theorem]{Proposition}
\newtheorem{definition}[theorem]{Definition}
\newtheorem{lemma}[theorem]{Lemma}
\newtheorem{example}[theorem]{Example}
\DeclareMathAlphabet{\mathcalligra}{T1}{calligra}{m}{n}
\DeclareFontShape{T1}{calligra}{m}{n}{<->s*[2.2]callig15}{}
\newcommand{\ie}{\emph{i.e.}} 
\newcommand{\prnt}[1]{\ensuremath{\left(#1\right)}} 
\newcommand{\chave}[1]{\ensuremath{\left\{#1\right\}}}  
\newcommand\numberthis{\addtocounter{equation}{1}\tag{\theequation}}
\providecommand{\keywords}[1]
{
  \textbf{\textit{Keywords---}} #1
}
\newcommand{\Eins}{{\mathbf 1}}
\newcommand{\ti}{\widetilde}
\newcommand{\ol}{\overline}
\newcommand{\RR}{\mathbb{R}}
\newcommand{\QQ}{\mathbb{Q}}
\newcommand{\ZZ}{\mathbb{Z}}
\newcommand{\HH}{\mathbb{H}}
\newcommand{\CC}{\mathbb{C}}
\newcommand{\PP}{\mathbb{P}}
\newcommand{\TT}{\mathbb{T}}
\newcommand{\UU}{\mathbb{U}}
\newcommand{\calM}{\mathcal{M}}
\newcommand{\calT}{\mathcal{T}}
\newcommand{\calA}{\mathcal{A}}
\newcommand{\calD}{\mathcal{D}}
\newcommand{\calL}{\mathcal{L}}
\newcommand{\calH}{\mathcal{H}}
\newcommand{\calK}{\mathcal{K}}
\newcommand{\calN}{\mathcal{N}}
\newcommand{\calS}{\mathcal{S}}
\newcommand{\End}{{\mathop{\mathrm{End}}}}
\newcommand{\CSpin}{{\mathop{\mathrm{CSpin}}}}
\newcommand{\KS}{\mathop{\mathrm{KS}}}
\newcommand{\GL}{\mathop{\mathrm{GL}}}
\newcommand{\SO}{{\mathop{\mathrm{SO}}}}
\newcommand{\Cl}{{\mathop{\mathrm{Cl}}}}
\newcommand{\Clp}{{\mathop{\mathrm{Cl}}}^+}
\newcommand{\T}{\mathop{\null\mathrm{T}}}
\newcommand{\Pic}{\mathop{\mathrm{Pic}}}
\newcommand{\AMT}{\mathcal{A}_{\mathcal{M},\mathcal{T}}}
\newcommand{\fro}{\mathfrak{o}}
\newcommand{\NS}{{\mathop{\mathrm{NS}}}}
\newcommand{\U}{\mathop{\null \mathrm{U}}}
\newcommand{\hotimes}{\mathop{\widehat{\otimes}}}
\newcommand{\wh}{\widehat}
\newcommand{\tAd}{{\widetilde{{\mathop{\mathrm{Ad}}}}}}
\newcommand{\Ad}{{{{\mathop{\mathrm{Ad}}}}}}
\newcommand{\Spin}{{\mathop{\mathrm{Spin}}}}
\newcommand{\Ogrp}{{\mathop{\null\mathrm{O}}}}
\newcommand{\MT}{{\mathop{\mathrm{MT}}}}
\newcommand{\diag}{{\mathop{\mathrm{diag}}}}
\newcommand{\DMT}{\mathcal{D}_{\mathcal{M},\mathcal{T}}}
\newcommand{\Nm}{{\mathop{\mathrm{Nm}}}}
\newcommand{\Srm}{K}
\newcommand{\Kum}{{\mathop{\mathrm{Kum}}}}
\newcommand{\im}{\mathop{\mathrm{Im}}}
\newcommand{\tr}{\mathop{\mathrm{tr}}}
\newcommand{\tiF}{\ti{F}}
\newcommand{\real}{\mathop{\mathrm{re}}}
\DeclareRobustCommand{\rchi}{{\mathpalette\irchi\relax}}
\newcommand{\irchi}[2]{\raisebox{\depth}{$#1\chi$}} 
\newcommand\textbox[1]{\parbox{0.75\textwidth}{\raggedright #1}}
\title{Kuga-Satake construction on families of K3 surfaces of Picard rank $14$}
\date{}
\author{Flora Poon\\Department of Mathematical Sciences, University of Bath \\North Rd, Claverton Down, Bath BA2 7AY, UK\footnote{The current address of the author is: National Center for Theoretical Sciences, No. 1, Sec. 4, Roosevelt Rd., Taipei City, Taiwan.\\The current email address of the author is: wkpoon@ncts.ntu.edu.tw.}}
\begin{document}

    \maketitle
\begin{abstract}

The isometry between the type IV$_6$ and the type II$_4$ hermitian symmetric domains suggests a possible relation between suitable moduli spaces of K3 surfaces of Picard rank $14$ and of polarised abelian $8$-folds with totally definite quaternion multiplication. 
We show how this isometry induces a geometrically meaningful map between such moduli spaces using the Kuga-Satake construction. Furthermore, we illustrate how the modular mapping can be realised for some specific families of K3 surfaces of Picard rank 14, which can be specialised to families of K3 surfaces of higher Picard rank.
\end{abstract}

\keywords{Kuga-Satake construction, coarse moduli spaces, K3 surfaces, abelian varieties.}

\subsection*{Data availability}
Data sharing not applicable to this article as no datasets were generated or analysed during the current study.

\subsection*{Conflict of interest}
There is no conflict of interest.
\section{Introduction}

Consider a lattice $P$ of signature $(1, r-1)$ with $1 \leq r \leq 20$ which primitively embeds into the K3 lattice~$\Lambda_{K3}$.
We will consider algebraic K3 surfaces over $\CC$ with quasi-polarisation by $P$, and we denote the moduli space of such K3 surfaces as $\calK_P$.
It is known \cite{d} that each irreducible component of~$\calK_P$ is a {\em locally symmetric variety} (LSV), which is a Riemannian manifold that is locally reflectionally symmetric around any point. 
Algebraically, they can be expressed \cite[Theorem VIII.7.1]{he} as biquotients in the form~$\Gamma \backslash G / K$, 
where $G$ is a connected simple Lie group, $K$ is a non-discrete maximal compact subgroup of $G$, and $\Gamma$ is an arithmetic subgroup of $G$. In this situation, $G/K$ is a {\em Hermitian symmetric domain} (HSD). 
Considering the different options for the Lie group~$G$ in this characterisation gives rise to a classification for both HSDs and LSVs: see \cite[Table~X.6.V]{he}. 
Under this classification, certain types of LSVs can be viewed as moduli spaces \cite[Section 3]{lo}:
the HSD~$G/K$ serves as the period domain, and the left translation action of the arithmetic subgroup $\Gamma<G$ on $G/K$ identifies period points that correspond to isomorphic polarised varieties. 
In particular, the period domain of~$\calK_P$, which is the set of weight two Hodge structures on the second integral cohomology group of a~$P$-polarised K3 surface, is the union of two copies of the HSD (of type IV$_{20-r}$ in the classification in \cite{he}) given by \[\SO^+(2, 20-r)/(\SO(2) \times \SO(20-r)).\] 
The lattice polarisation by $P$ determines an arithmetic subgroup $\Gamma(P) < \SO^+(2, 20-r)$ containing Hodge isometries that correspond to isomorphisms of $P$-polarised K3 surfaces.

We are especially interested in the type IV$_{20-r}$ series of LSVs, because for $r$ large, $\ie$ close to~$20$, the HSD overspace is isometric to the period domain of a different modular variety as Riemannian manifolds. 
For example \cite{ghs2}, the type IV$_4$ and type IV$_5$ HSDs ($r = 16$ and $r = 15$) coincide with the period domains of a modular variety of deformations of generalised Kummer varieties and of hyperk\"ahler manifolds of OG6 type respectively.
When $r = 14$,
the type IV$_6$ HSD is isometric to the type II$_4$ HSD  \cite[Exercise X.D.2(b)]{he}. 
The latter HSD, characterised as the Lie group quotient $\SO^*(8)/\U(4)$, can be identified with the set of weight one Hodge structures on the first integral cohomology group of a polarised abelian $8$-fold with totally definite quaternion multiplication, which is also the period domain of a modular variety $\AMT$ of polarised abelian $8$-folds whose PEL structures are controlled by certain attributes $\calM$ and $\calT$. 
In fact, $\AMT$ is a type II$_4$ LSV, $\ie$ it is an arithmetic quotient of $\SO^*(8)/\U(4)$ (see \cite[Section 9.7]{bl}). 

Let us denote the isometry between the HSDs in the case of $r = 14$ by $\tiF$. This map is studied in \cite{sa}.
Using results from \cite[Section 3.6]{sa}, we will show that any arithmetic subgroup $\Gamma(P) < \SO^+(2,6)$ can be mapped to an arithmetic subgroup $\Gamma^*(P)<\SO^*(8)$ (See the discussion following Theorem \ref{thm F}).
Moreover, $\tiF$ is equivariant with the natural conjugation actions of  $\Gamma(P)$ and $\Gamma^*(P)$, then $\tiF$ descends to a map $F$ from~$\calK_P$ to a certain moduli space $\AMT$ of abelian $8$-folds with totally definite quaternion multiplication. 
\begin{center}

\tikzset{every picture/.style={line width=0.75pt}} 

\begin{tikzpicture}[x=0.75pt,y=0.75pt,yscale=-1,xscale=1]

\draw    (378,53) -- (458.47,53.39) ;
\draw [shift={(460.47,53.4)}, rotate = 180.34] [color={rgb, 255:red, 0; green, 0; blue, 0 }  ][line width=0.75]    (10.93,-3.29) .. controls (6.95,-1.4) and (3.31,-0.3) .. (0,0) .. controls (3.31,0.3) and (6.95,1.4) .. (10.93,3.29)   ;
\draw    (286,66) -- (286.45,140.4) ;
\draw [shift={(286.47,142.4)}, rotate = 269.65] [color={rgb, 255:red, 0; green, 0; blue, 0 }  ][line width=0.75]    (10.93,-3.29) .. controls (6.95,-1.4) and (3.31,-0.3) .. (0,0) .. controls (3.31,0.3) and (6.95,1.4) .. (10.93,3.29)   ;
\draw    (512,68) -- (510.51,142.4) ;
\draw [shift={(510.47,144.4)}, rotate = 271.15] [color={rgb, 255:red, 0; green, 0; blue, 0 }  ][line width=0.75]    (10.93,-3.29) .. controls (6.95,-1.4) and (3.31,-0.3) .. (0,0) .. controls (3.31,0.3) and (6.95,1.4) .. (10.93,3.29)   ;
\draw    (306,156) -- (482.47,157.38) ;
\draw [shift={(484.47,157.4)}, rotate = 180.47] [color={rgb, 255:red, 0; green, 0; blue, 0 }  ][line width=0.75]    (10.93,-3.29) .. controls (6.95,-1.4) and (3.31,-0.3) .. (0,0) .. controls (3.31,0.3) and (6.95,1.4) .. (10.93,3.29)   ;

\draw (193,42.4) node [anchor=north west][inner sep=0.75pt]    {$\SO^{+}( 2,6) /(\SO( 2) \times \SO(6))$};
\draw (414,31.4) node [anchor=north west][inner sep=0.75pt]    {$\tilde{F}$};
\draw (466,42.4) node [anchor=north west][inner sep=0.75pt]    {$\SO^{*}( 8) /\U( 4)$};
\draw (289,92.4) node [anchor=north west][inner sep=0.75pt]    {$/\Gamma ( P)$};
\draw (514,85.4) node [anchor=north west][inner sep=0.75pt]    {$/\Gamma^{*}( P)$};
\draw (274,148.4) node [anchor=north west][inner sep=0.75pt]    {$\mathcal{K}_{P}$};
\draw (490,145.4) node [anchor=north west][inner sep=0.75pt]    {$\mathcal{A}_{\mathcal{M} ,\mathcal{T}}$};
\draw (393,137.4) node [anchor=north west][inner sep=0.75pt]    {$F$};

\end{tikzpicture}

    \captionof{diagram}{$\tiF$ descends.}\label{pic_descend}
\end{center}
The goal of this work is to realise the map $F$ as a modular mapping. More specifically, by identifying the domain and the target of $F$ as (an irreducible component of) the modular varieties $\calK_P$ and $\AMT$, we will explicitly describe how $F$ takes a K3 surface $X$ in $K_P$ to an abelian $8$-fold $A$ in $\AMT$. 
The {\em Global Torelli Theorem} which associates $X$ with its weight $2$ polarised Hodge structure on $H^2(X,\ZZ)$, and $A$ with its weight $1$ polarised Hodge structure on $H^1(A,\ZZ)$, reduces the problem to a purely lattice-theoretic one. 
This allows us to give a geometric interpretation of the map $F$ in terms of the {\em Kuga-Satake construction} \cite{ks}, a process that produces a weight $1$ Hodge structure from the Clifford algebra of a weight $2$ Hodge structure of K3 type.

The $r=14$ case is special. 
For slightly larger $r$, $\ie$ $r = 15$ and $16$, the question of finding an explicit modular mapping induced from the isometry of from a type IV$_4$ or type IV$_5$ HSD to the relevant period domain of a family of hyperk\"ahler manfiolds is premature: there is no known explicit family of deformations of generalised Kummer varieties or OG6 varieties.
Moreover, the case $r=14$ is the smallest $r$ such that there exists an isometry from a type IV$_{20-r}$ HSD to the period domain of a different modular variety. 
More importantly, the two period domains have the same dimension, so it is possible that the isometry induces an isomorphism $F$ of modular varieties upon choosing suitable arithmetic subgroup $\Gamma(P)$ in~$\SO^+(2,6)$.
If this induced map $F$ is holomorphic, then by \cite{kk} it extends to the Satake-Baily-Borel compactifications of $\calK_P$ and $\AMT$.

The layout of the paper is as follows:
In Section \ref{sect bg}, we will give the necessary set up for the construction of $F$, which includes 
a brief introduction of the moduli spaces $\calK_P$ and $\AMT$ (Section~\ref{subsect moduli}), the notion of Clifford algebras and some related concepts (Section \ref{subsect Clifford}), and the classical Kuga-Satake construction (Section \ref{subsect KS}). 
In Section \ref{sect F}, we will give an explicit description of a geometrically meaningful map $F$ from some suitable (see assumption (\ref{assumption2})) irreducible $\calK_P$ to $\AMT$ inspired by \cite{kstt}. 
We will also prove that the constructed map descends from the diffeomorphism $\tiF$ between the HSD overspaces.
In Section \ref{sect MAGMA}, we will describe some technicalities of the construction of $F$ by working on an example.
Finally in Section \ref{sect 18}, we will also observe some special behaviour of the map $F$ as we specialise our construction to subfamilies of $\calK_P$ which parametrise K3 surfaces with richer geometric properties.

\section*{Acknowledgement}
The author would like to thank her PhD supervisor Gregory Sankaran for his help and support through out the project. 
The author is also grateful to Bert van Geemen for the many useful discussions on Section \ref{sect MAGMA}, and to Alan Thompson for comments and advice.
This work also benefited from discussions with Andreas Malmendier, Adrian Clingher, Calla Tschanz and Alice Garbagnati. Special thanks go to the referee for pointing out some errors in the first version and for helping to make the exposition clearer.
The project is supported by EPSRC Research scheme EPSRC International Doctoral Scholars - IDS grant number EP/V520305/1.

\section{Background}\label{sect bg}
We will use the following notations:
If $L/K$ is a finite extension of fields, $R$ is a ring, $V$ is an $R$-module, and $K$ is an $R$-algebra, then we write $V_K$ for $V \otimes_R K$.

\subsection{Moduli spaces}\label{subsect moduli}
We will recall a minimal list of facts and properties of the moduli spaces $\calK_P$ and $\AMT$ for the construction of the map $F$, most of which are extracted from \cite[Section 2.2]{ae}, \cite{d}, \cite[Section 9]{bl} and \cite{sh}. 
Interested readers may also find in these sources more details of these moduli spaces and the varieties they parametrise.

\subsubsection{Moduli space of lattice polarised K3 surfaces}\label{subsubsect moduli K3}


We will give a brief description of the moduli space $\calK_P$ of $P$-polarised K3 surfaces.
First, we consider the case that the lattice $P$ satisfies the following condition:

\begin{equation}
    \textbox{all primitive embeddings $P \hookrightarrow \Lambda_{K3}$ lie in the same orbit of the isometry group  $\Ogrp(\Lambda_{K3})$ of the K3 lattice.}\tag{$\ast$}\label{assumption}
\end{equation}
In this case, the {\em transcendental lattice} $T$, which is the orthogonal complement of $i(P)$ in $\Lambda_{K3}$ for any primitive embedding
$i: P\hookrightarrow \Lambda_{K3}$, is defined uniquely up to isometry.
Let us consider the period domain $\calD_T$ of $\calK_P$, which is also the set of polarised weight two Hodge structures $T_\CC \simeq T^{2,0} \oplus T^{1,1} \oplus T^{0,2}$ on $T$.

\begin{proposition}\label{prop D_T}
    The period domain $\calD_T$ can be characterised in the following equivalent ways:
\begin{enumerate}
        \item {\em \cite[Remark 4.6]{vg}} as the set of group homomorphisms 
        \[\chave{h: \UU \longrightarrow \SO(T_\RR) \colon h(z)(t) = z^p\Bar{z}^qt \text{ for all }t \in T^{p,q} \subset T_\CC}\]
        where $\UU := \chave{z \in \CC^* \colon z\Bar{z} = 1}$ and $\SO(T_\RR)$ is the special orthogonal group.
        The set of group homomorphisms admits a natural group action of $\Ogrp(T_\RR) \simeq \Ogrp(2, 20-r)$ by conjugation. 
        $\ie$~for any $g \in \Ogrp(T_\RR)$,
        \[g\colon h \longmapsto h^g:=  ghg^{-1}.\]

        \item {\em \cite[Section 9]{dk}} as the set of projective lines $T^{2,0} \subset T_\CC$, which can be described as
        \[\chave{[l]\in \PP(T_\CC): l^2 = 0, l \cdot \Bar{l} > 0}.\]
        By writing $l = x+iy$ where $x, y \in T_\RR$ for any $[l]$, the above set is also the
        set of oriented planes~$\Pi := \langle x, y\rangle \subset T_\RR$ through the origin that are positive definite with respect to the restriction of the quadratic form of~$\Lambda_{K3}$ onto $T$.
        The latter set admits a natural group action of~$\Ogrp(T_\RR)$ by left multiplication.
        $\ie$~for any $g \in \Ogrp(T_\RR)$,
        \[g \colon \Pi \longmapsto g\Pi.\]
    \end{enumerate}
Moreover {\em  \cite[proof of Lemma 4.4]{vg}}, the two actions of $\Ogrp(T_\RR)\simeq \Ogrp(2, 20-r)$ are equivalent under the identification of the two characterisations of $\calD_T$.
\end{proposition}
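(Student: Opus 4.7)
The plan is to translate a weight two Hodge structure of K3 type on $T$ into each of the two advertised descriptions, and then to check that the natural $\Ogrp(T_\RR)$-actions match. Throughout, recall that K3 type means $\dim T^{2,0} = \dim T^{0,2} = 1$ and that the Hodge decomposition is orthogonal for the $\CC$-linear extension of the pairing, with $T^{2,0} \perp T^{2,0}$ and $T^{0,2} \perp T^{0,2}$.

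For the equivalence with (1), given a Hodge decomposition $T_\CC = T^{2,0} \oplus T^{1,1} \oplus T^{0,2}$, I would define $h(z)$ to act as multiplication by $z^p\bar z^q$ on $T^{p,q}$. Since $\overline{T^{p,q}} = T^{q,p}$ and $z\bar z = 1$, the map $h(z)$ commutes with complex conjugation, hence restricts to a real endomorphism of $T_\RR$; it preserves the quadratic form because the only non-trivial pairings are $T^{p,q} \times T^{2-p,2-q} \to \CC$ on which the scalar acts as $z^p\bar z^q \cdot z^{2-p}\bar z^{2-q} = 1$; and its determinant is $z^2 \cdot 1 \cdot \bar z^2 = 1$, so $h(z) \in \SO(T_\RR)$. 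Conversely, given such an $h$, one reconstructs the Hodge pieces as the eigenspaces of $h(z)$ for the characters $z \mapsto z^p\bar z^q$; the K3-type condition is encoded in the requirement that these eigenspaces have the prescribed dimensions. Multiplicativity of $h$ is immediate from the definition on each eigenspace.

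For the equivalence with (2), given a K3-type Hodge structure I send it to $[l] = [T^{2,0}] \in \PP(T_\CC)$. The condition $l^2 = 0$ is the isotropy of $T^{2,0}$, and $l\cdot\bar l > 0$ expresses positivity of the Hermitian form on $T^{2,0}$; conversely a projective line satisfying these conditions determines $T^{2,0}$, hence $T^{0,2} = \overline{T^{2,0}}$, and $T^{1,1}$ is the orthogonal complement. To pass from lines to oriented positive-definite real planes, decompose a representative $l = u + iv$ with $u, v \in T_\RR$: the equations $l^2 = 0$ and $l \cdot \bar l > 0$ become $u^2 = v^2 > 0$ and $u\cdot v = 0$, so $\Pi = \mathrm{span}_\RR(u,v)$ is a positive-definite plane, and rescaling $l$ by $\lambda \in \CC^*$ rotates and scales $(u,v)$ while preserving the plane and the orientation given by the ordered basis $(u,v)$; changing $l$ to $\bar l$ reverses the orientation, which accounts exactly for the $\PP^1$-ambiguity versus the oriented-plane data.

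Finally, to match the two $\Ogrp(T_\RR)$-actions, fix $g \in \Ogrp(T_\RR)$ and suppose $h$ corresponds to $\{T^{p,q}\}$ with $T^{2,0}$ realised by the plane $\Pi$. For $w \in T_\CC$, the relation $h^g(z)w = g\,h(z)(g^{-1}w) = z^p\bar z^q w$ holds precisely when $g^{-1}w \in T^{p,q}$, i.e.\ when $w \in g(T^{p,q})$. Hence the Hodge decomposition attached to $h^g$ is $\{g(T^{p,q})\}$, and in particular its $(2,0)$-piece is $g(T^{2,0})$, whose underlying real plane is $g\Pi$. This is exactly the image under left multiplication in characterisation (2), so the two actions coincide under the bijection. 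The only place requiring any care is the orientation bookkeeping in the line–plane correspondence, which is the main thing to verify cleanly; the rest amounts to unwinding definitions.
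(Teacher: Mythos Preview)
Your argument is correct and is exactly the standard unwinding of definitions that underlies the cited references; the paper itself does not supply a proof of this proposition but simply records the two characterisations with citations to \cite{vg} and \cite{dk} and attributes the compatibility of actions to \cite[proof of Lemma~4.4]{vg}. Your write-up therefore fills in precisely what the paper omits, and nothing more is needed.
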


From the second characterisation, it is apparent that $\calD_T$ has two connected components which consists of the positively oriented planes (correspond to the lines $[l]$)) and the negatively oriented planes (correspond to the lines $[\Bar{l}]$) respectively. We denote the former component, which is the image of the identity component of $\SO(T_\RR)$, by $\calD^+_T$.
In terms of the first characterisation, $\calD^+_T$ is also the set of group homomorphisms 
\[\chave{h \in \calD_T \colon h \text{ factors through } \SO^+(T_\RR)},\]
where $\SO^+(T_\RR)$ is the identity component of the group $\SO(T_\RR) \simeq \SO(2, 20-r)$.
By a generalisation of Witt's Theorem \cite[Remark 4.6]{vg},  the conjugation action of $\SO^+(T_\RR) < \Ogrp(T_\RR)$ on $\calD^+_T$ is transitive with stabiliser subgroup $\SO(2)\times \SO(20-r)$. 
Therefore, 
\begin{proposition}{\em \cite[Section 3]{dk}}\label{prop K_P}
    The HSD $\calD^+_T$ is the quotient space 
\[\SO^+(2,20-r)/\prnt{\SO(2)\times \SO(20-r)}.\]
Moreover, given the base $\calK_P$ of any complete family of $P$-polarised K3 surfaces with the assumption~(\ref{assumption}) satisfied, there exists an arithmetic subgroup~$\Gamma(P)$ of $\Ogrp(T_\RR)<\SO^+(2, 20-r)$ called the {\em monodromy group}, such that $\calK_P \simeq \Gamma(P) \backslash \calD_T$.
\end{proposition}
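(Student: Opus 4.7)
The plan is to derive the two assertions almost entirely from material already collected in the preceding discussion. For the first assertion, I would simply apply the orbit--stabiliser theorem to the data laid out immediately before the proposition: since $\SO^+(T_\RR)$ acts transitively on $\calD_T^+$ (by the generalised Witt theorem cited from \cite{vg}) with point-stabiliser isomorphic to $\SO(2)\times \SO(20-r)$ (the maximal compact fixing a choice of positively oriented positive-definite $2$-plane and its orthogonal complement), the evaluation map at any base point descends to a diffeomorphism
\[
\SO^+(2,20-r)/\bigl(\SO(2)\times \SO(20-r)\bigr)\ \xrightarrow{\ \sim\ }\ \calD_T^+.
\]
The only minor verification is that this smooth bijection is biholomorphic for the complex structure defined by the Hodge filtration in Proposition~\ref{prop D_T}(2); this is standard once one observes that the isotropy subgroup is exactly the centraliser of the circle $h(\UU)$ appearing in Proposition~\ref{prop D_T}(1).

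For the second assertion I would appeal to the Global Torelli theorem for K3 surfaces. Given a $P$-polarised K3 surface $X$, a choice of marking $\phi\colon H^2(X,\ZZ)\xrightarrow{\sim}\Lambda_{K3}$ extending the given polarisation embedding $P\hookrightarrow\Lambda_{K3}$ sends the Hodge decomposition of $X$ to a period point in $\calD_T$. Under assumption $(\ast)$, any two such markings differ by an element of the isometry group
\[
\Gamma(P)\ :=\ \bigl\{\,g\in \Ogrp(T)\ :\ g\text{ extends to }\Lambda_{K3}\text{ by the identity on }P\,\bigr\},
\]
so the period map is well-defined up to the action of $\Gamma(P)$. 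Global Torelli then says that the induced map $\calK_P\to \Gamma(P)\backslash \calD_T$ is bijective on points (isomorphism classes), and Local Torelli together with the surjectivity of the period map \cite{d} upgrade this to an isomorphism of analytic spaces (an irreducible component of $\calK_P$ corresponds to $\Gamma(P)\backslash \calD_T^+$ on passing to positively oriented periods).

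It remains to check that $\Gamma(P)$ is an arithmetic subgroup of $\Ogrp(T_\RR)\simeq \Ogrp(2,20-r)$ contained in the index-two subgroup $\SO^+(2,20-r)$ after suitably orienting periods. Arithmeticity is standard: $\Gamma(P)$ is commensurable with the stable orthogonal group $\wt{\Ogrp}(T)$, namely the kernel of $\Ogrp(T)\to\Ogrp(q_T)$ on the discriminant form, which is visibly defined by integrality plus congruence conditions and hence arithmetic in $\Ogrp(T_\RR)$. The step most likely to require care is the identification of $\Gamma(P)$ as precisely the group of isometries of $T$ that arise from isometries of $\Lambda_{K3}$ fixing $P$ pointwise: this is where assumption $(\ast)$ is essential, because different $\Ogrp(\Lambda_{K3})$-orbits of embeddings would a priori give different candidate groups. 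Once $(\ast)$ is invoked, Nikulin's lifting criterion ensures every element of $\wt{\Ogrp}(T)$ extends to $\Lambda_{K3}$ fixing $P$, closing the identification and completing the proof.
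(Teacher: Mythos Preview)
Your proposal is correct and matches the paper's approach: the paper derives the first assertion in the single sentence preceding the proposition (transitivity via the generalised Witt theorem with stabiliser $\SO(2)\times\SO(20-r)$, hence the orbit--stabiliser identification), and for the second assertion it gives no argument of its own, simply citing \cite{dk} and \cite{d}. Your treatment of the second part via Global Torelli, the explicit description of $\Gamma(P)$, and Nikulin's extension criterion is a faithful unpacking of those references rather than a different route.
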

\begin{remark}\label{rmk D_T}
~\\
\vspace{-1.5em}
\begin{enumerate}
    \item From item $2$ in Proposition~\ref{prop D_T}, the dimension of a type IV$_{20-r}$ HSD, and thus of any of its arithmetic quotients, can be calculated as $20-r$.
    \item The Baily-Borel Theorem \cite[Section 4]{lo} says that an arithmetic quotient of a HSD is quasi-projective.
    If (\ref{assumption}) is satisfied and the hyperbolic lattice~$U$ is a summand in~$T$, then the monodromy group $\Gamma(P)$ swaps the two connected components of $\calD_T$, and $\calK_P$ is irreducible \cite[Proposition~5.6]{d}.
\end{enumerate}
\end{remark} 

If $P$ does not satisfy the assumption (\ref{assumption}), then 
\[\calK_P \simeq \bigcup_{l=1}^d \Gamma_l(P)\backslash\calD_T,\]
where each $\Gamma_l(P)$ is a monodromy group determined by an embedding $P \hookrightarrow \Lambda_{K3}$ in each $\Ogrp(\Lambda_{K3})$-orbit.

We say that a K3 surface~$X$ is {\em very general} in $\calK_P$
if the polarisation embedding $P \hookrightarrow \Pic(X)$ is surjective. 
In particular, all K3 surfaces that are not very general are contained in the union of countably many proper subvarieties of~$\calK_P$.

\subsubsection{Moduli space of abelian varieties with totally definite quaternion multiplication}\label{subsubsect moduli ab}

In this paper, we consider a $g$-dimensional abelian variety $A$ as a complex torus $\CC^g/\Lambda$ with a polarisation structure given by the first Chern class of an ample line bundle $L$ in $\NS(A)$, which we also denote by~$L$.
Equivalently, the complex torus can be replaced by a pair $(\TT, J)$. 
The first term is a real torus $\TT \simeq\Lambda^{\real}_\RR/\Lambda^{\real}$ determined by a lattice $\Lambda^{\real} \subset \RR^{2g}$.
The second item $J$ is a {\em complex structure}, which is a linear operator on  $\Lambda^{\real}_\RR$ satisfying $J^2 = -1$. The complex structure $J$ can be identified with a weight one Hodge structure on $\Lambda^{\real}_\CC$, which is the decomposition of $\Lambda^{\real}_\CC$ into the direct sum of the $(+i)$-eigenspace $(\Lambda^{\real})^{1,0}$ and the $(-i)$-eigenspace $(\Lambda^{\real})^{0,1}$ of $J$ respectively.
In particular, if we fix a pair~$(\Lambda^{\real}, J)$, then~$\Lambda$ is the image of $\Lambda^{\real}$
under the $\RR$-linear isomorphism
\begin{align}\label{eq mu}
    \mu: \Lambda^{\real}_\RR &\longrightarrow \CC^g \simeq (\Lambda^{\real})^{0,1} \\
    v &\longmapsto \frac{1}{2}\prnt{v-iJ(v)}.\nonumber
\end{align}
On the other hand, the choice of ample line bundle $L$ can be identified with an alternating form $E$ on~$\Lambda$ given by a matrix in the form
\[\begin{bmatrix}
        0 & D\\
        -D & 0
    \end{bmatrix}\]
where for a suitable choice of basis, $D = \diag(d_1, \cdots, d_g)$ with $d_i >0$ and $d_i|d_{i+1}$ for all $i$. The alternating form $E$ can be extended $\RR$-linearly to $\Lambda\otimes_\ZZ\RR = \CC^g$, and it defines a polarisation on the weight one Hodge structure $(\Lambda^{re}, J)$.
The $g$-by-$g$ matrix $D$ is  called the {\em polarisation type} of $A$. 

\begin{remark}\label{rmk E = polarisation}
    Conversely, an alternating form $E$ on $\CC^g$ represents the first Chern class of an ample line bundle if it satisfies an analogue of the Hodge-Riemann relations \cite[Theorem 2.1.6]{bl}.
    One of the conditions is that $E(\cdot, i\cdot) > 0$ for all $x \in \CC^g$.
\end{remark}

Next we describe endomorphism structures on an abelian variety~$A$.
\begin{definition}\label{def Ab End}{\em \cite[Section 9.1]{bl}}

    Let $(\mathbf{F},\rho)$ be a division ring of finite dimension over $\QQ$ and $\rho$ a positive anti-involution. 
    Let $\Phi$ be a representation of $\mathbf{F}$ by $g$-by-$g$ complex matrices
    $\Phi\colon \mathbf{F} \longrightarrow M_g(\CC)$.
    Then a $g$-dimensional abelian variety~$A$ has {\em endomorphism structure}  $(\mathbf{F}, \rho, \Phi)$ if there exists an embedding 
    $\iota:\mathbf{F} \hookrightarrow {\End}_\QQ(A) \subset M_g(\CC)$
    such that 
    \begin{enumerate}[(i)]
        \item $\Phi$ and $\iota$ are equivalent representations, and
        \item {\em (Rosati condition)} the Rosati involution on $\End_\QQ(A)$, which is an anti-involution induced by the polarisation of $A$, extends the anti-involution $\rho$ on $\mathbf{F}$ via $\iota$.
    \end{enumerate}
\end{definition}
We are interested in the case when   $\mathbf{F} \otimes_\QQ \RR \simeq \HH$, where $\HH$ is the Hamilton quaternions.
That is, when $\mathbf{F}$ is the quaternion algebra $(-a, -b)_\QQ$: it has  generators $I$, $J$ and $K$ such that $I^2 = -a \neq 0$, $J^2 = -b\neq 0$ and $K = IJ = -JI$.
In this case,~$g = 2m$ with~$m\in \ZZ_{>0}$ and the anti-involution~$\rho$ is given by the standard quaternionic conjugation. We will pick the representation $\Phi$ to be the representation as described in \cite[Section 2.2]{sh}
\begin{align*}
\Phi: \mathbf{F} &\longrightarrow M_g(\CC)\\
    x &\longmapsto \rchi(x) \otimes \Eins_m,
\end{align*}
where $\rchi$ is the representation of $\mathbf{F}$
   \begin{align*}
    \rchi\colon \mathbf{F} &\longrightarrow M_2(\CC)\\
    \alpha+\beta J &\longmapsto 
    \begin{pmatrix}
        \alpha_\CC & \sqrt{b}\cdot \beta_\CC\\
        -\sqrt{b}\cdot\ol{\beta_\CC} &\ol{\alpha_\CC}
    \end{pmatrix}.
\end{align*}
Here, $\otimes$ is the {\em Kronecker product of matrices}; $\alpha, \beta$ in the form of $r+sI$ with $r,s \in \QQ$; and $(r+sI)_\CC := r+s\sqrt{a}i$ is the corresponding complex number.
An abelian variety $A$ with such an endomorphism structure is said to admit a {\em totally definite quaternion multiplication}. 

We will now describe the moduli space $\AMT$ of abelian varieties of dimension $g=2m$ and polarisation type with totally definite quaternion multiplication.
For each member $A = \prnt{\CC^{2m}/\Lambda, E}$ in the family, one can associate a set of $m$ vectors $\chave{x_1, \cdots, x_m} \subset \CC^{2m}$ such that 
\begin{equation}
    \label{eq_xi}\Lambda_\QQ = \sum_{i=1}^m \Phi(\mathbf{F})x_i.
\end{equation}
Every member $A$ of $\AMT$ shares the same pair of attributes $(\calM, \calT)$.
The first attribute $\calM$ is a free~$\ZZ$-module of rank $4m$ in $\mathbf{F}^m$, such that when restricting Equation (\ref{eq_xi}) to the lattice $\Lambda$, we have 
\begin{equation}
    \label{eq_calM}\Lambda = \chave{\sum_{i=1}^m \Phi(a_i)x_i \colon (a_1, \cdots, a_m) \in \calM}.
\end{equation}
The second attribute is a non-degenerate matrix $\calT := (t_{ij}) \in M_m(\mathbf{F})$ which determines the alternating form $E$ on $\Lambda_\QQ$. 
In particular,  for all $x, y \in \Lambda_\QQ$, there exist suitable $a_i, b_j\in \mathbf{F}$ such that 
\begin{equation}
    \label{eq_ET}E(x, y) = E\prnt{\sum_{i=1}^m \Phi(a_i)x_i, \sum_{j=1}^m \Phi(b_j)x_j} = \tr\nolimits_{\mathbf{F}|_\QQ}\prnt{\sum_{i,j = 1}^m a_i t_{ij} b_j^\rho},
\end{equation}
where $\tr_{F|_\QQ}$ is the reduced trace over $\QQ$: the precise definition can be found in \cite[Section 5.5]{bl}.

Consider the period domain $\DMT$ of $\AMT$, which is the set of weight one Hodge structures on~$\Lambda^{\real}_\RR$. 
Like $\calK_P$, $\DMT$ can be expressed as a Lie group quotient.
Firstly, we have the following proposition involving the Lie group $\SO^*(2m)$, which can be considered \cite[Proposition 2.89]{harv} as the intersection of the indefinite unitary group $\U(m,m)$ and 
\[\GL{}_m(\HH) := \chave{M \in \GL{}_{2m}(\CC) : M\Phi(h) = \Phi(h)M \text{ for all } h \in \HH}.\]
\begin{proposition}\label{prop DMT}

    The period domain $\DMT$ 
    can be characterised in the following equivalent ways:
\begin{enumerate}
        \item {\em \cite[Section 4]{dk}} as the set of group homomorphisms 
        \[\chave{h: \UU \longrightarrow \SO^*(2m)<\GL(\Lambda_\RR): h(a+bi) = a\Eins_{4m}+bJ \text{, where } J \text{ is a complex structure on }\Lambda^{\real}_\RR}.\]
        Here, $\SO^*(2m)$ is viewed as a group of real $4m$-by-$4m$ matrices via
        \begin{align*}
           M_{2m}(\CC) &\longrightarrow M_{4m}(\RR)\\
            A + Bi &\longmapsto  \begin{bmatrix}A & B\\-B &A\end{bmatrix}
\text{, where }A, B \in M_{2m}(\RR),
        \end{align*}
        and it acts on the set of group homomorphisms by conjugation.
        Each representation $h$ can be recovered from the {\em Weil operator} $J$.

        \item {\em \cite[Section 9.5]{bl}} as the set of 
        normalised period matrices
        \[\chave{X = \begin{bmatrix}
        -Z & \Eins_m\\\Eins_m &\Bar{Z}
    \end{bmatrix}: Z \in \calH_m := \chave{Z \in M_m(\CC):= -Z = Z^t, 1-Z\Bar{Z}^t > 0}}. 
    \]
    Each normalised period matrix uniquely determines an $m$-vector $\chave{x_1, \cdots x_m}$ that satisfies (\ref{eq_xi}), thus a lattice $\Lambda < \CC^g$.
    The group $\SO^*(2m)$ acts on the set of period matrices by left multiplication.
    \end{enumerate}
Moreover, the two actions of $\SO^*(2m)$ are equivalent under the identification of the two sets.
\end{proposition}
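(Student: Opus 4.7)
The plan is to derive each characterisation as a concrete repackaging of the definition of $\DMT$ as the set of weight-one Hodge structures $\Lambda^{\real}_\CC = (\Lambda^{\real})^{1,0} \oplus (\Lambda^{\real})^{0,1}$ compatible with both the polarisation $E$ (determined by $\calT$ via (\ref{eq_ET})) and the quaternion multiplication $\Phi$, and then to verify that the two $\SO^*(2m)$-actions agree. For (1), I would use the standard dictionary between weight-one Hodge structures on $\Lambda^{\real}_\RR$ and complex structures $J$: given a Hodge structure, define $h(z)$ to act as $z$ on $(\Lambda^{\real})^{1,0}$ and $\bar z$ on $(\Lambda^{\real})^{0,1}$; then $h\colon \UU \to \GL(\Lambda^{\real}_\RR)$ is a homomorphism, $h(-1) = -\Eins$ since the weight is one, and $J := h(i)$ satisfies $J^2 = -\Eins$. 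Conversely, $J$ recovers the Hodge decomposition as the $\pm i$-eigenspaces on $\Lambda^{\real}_\CC$. Compatibility with $\Phi(\HH_\QQ)$ forces $h$ to commute with $\Phi(\HH_\QQ)$, placing it in the $\HH$-linear group $\GL_m(\HH)$ (viewed via $\rchi$), while compatibility with $E$ together with the positivity in Remark \ref{rmk E = polarisation} places $h$ in the isometry group $\U(m,m)$ of an associated indefinite Hermitian form; the identification $\SO^*(2m) = \GL_m(\HH) \cap \U(m,m)$ mentioned just before the proposition then forces $h$ to factor through $\SO^*(2m)$.

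For (2), I would follow \cite[Section 9.5]{bl} and compute the Hodge filtration of a point of $\DMT$ as a period matrix in the basis of $\Lambda^{\real}_\RR$ obtained by applying the standard $\QQ$-generators of $\Phi(\HH_\QQ)$ to $\{x_1,\ldots,x_m\}$ from (\ref{eq_xi}). Using (\ref{eq_ET}) and $\calT$ to pass to a symplectic basis and then factoring out an $\Eins_m$ block puts the matrix into the claimed normalised shape. The first Riemann bilinear relation (vanishing of $E$ on the $(1,0)$-part) then becomes the skew-symmetry $Z + Z^t = 0$, while the positivity of $E(\cdot, J\cdot)$ becomes $1 - Z\bar Z^t > 0$; together these cut out exactly $\calH_m$.

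The equivalence of the two actions is then transparent: both are instances of the natural action of $\SO^*(2m) \subset \GL(\Lambda^{\real}_\RR)$, with conjugation $h \mapsto ghg^{-1}$ replacing $J$ by $gJg^{-1}$ and thereby moving the Hodge filtration by $g$, which in the fixed reference basis is recorded precisely by left multiplication $X \mapsto gX$. The main obstacle is verifying that $gX$ remains in the normalised form without requiring a right-hand correction; this amounts to a transitivity argument on $\calH_m$ together with a stabiliser computation yielding $\U(m)$, both carried out in \cite[Section 9.5]{bl}, and it also confirms $\DMT \simeq \SO^*(2m)/\U(m)$, in agreement with the description of $\DMT$ as a type II$_m$ HSD.
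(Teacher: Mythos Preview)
Your proposal is correct and aligns with the paper's approach. The paper itself does not prove items (1) and (2) at all, deferring entirely to the cited references \cite{dk} and \cite{bl}; your sketch of those arguments is a faithful expansion of what one finds there. For the ``Moreover'' clause the paper gives only a two-sentence explanation: it invokes the explicit $\RR$-linear isomorphism $\mu\colon v \mapsto \tfrac{1}{2}(v - iJ(v))$ from Equation~(\ref{eq mu}) to pass between the two descriptions, and then observes that replacing $J$ by $gJg^{-1}$ replaces $\mu$ by $g\mu g^{-1}$, which on the level of the image lattice (hence the period matrix) is left multiplication by $g$. Your version says the same thing in the language of moving the Hodge filtration rather than via $\mu$, and your remark about re-normalising after left multiplication is a legitimate point the paper suppresses; but the underlying mechanism is identical.
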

Here we give a brief explanation for the last statement.
A bijection between the two sets is determined by the $\RR$-linear isomorphism $\mu$ given in Equation~(\ref{eq mu}).
The two actions of $\SO^*(2m)$ are compatible in the following sense:
for any $g \in \SO^*(2m)$,
the complex structure $gJg^{-1}$ is equivalent to a change of basis in $\RR^{2g}$ by left multiplication of~$g$, which corresponds to the isomorphism given by $g(1/2(\Eins - iJ))g^{-1}$.

By Witt's Theorem again, the conjugation action of $\SO^*(2m)$ on $\DMT$ is transitive with $\U(m)$ being the stabiliser group \cite[Section 9.7]{bl}. 
\begin{proposition}{\em \cite[Section 9.5]{bl}}\label{prop AMT}
    
    The period domain $\DMT$ is the quotient space \[\SO^*(2m)/\U(m).\]
    Moreover, any complete family $\AMT$ of abelian $2m$-folds with fixed attributes $\calM$, $\calT$ is isomorphic to the arithmetic quotient of $\DMT$ by the monodromy group~$\Gamma(\calM,\calT)<\SO^*(2m)$.
\end{proposition}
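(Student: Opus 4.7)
The plan is to deduce Proposition \ref{prop AMT} directly from Proposition \ref{prop DMT} together with the generalised Witt's Theorem and the classical Torelli-type argument for PEL abelian varieties. The two assertions are essentially independent: the first identifies the Hermitian symmetric domain structure on $\calD_{\calM,\calT}$, while the second packages the global moduli interpretation.

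First, for the identification $\calD_{\calM,\calT} \simeq \SO^*(2m)/\U(m)$, I would take the first characterisation in Proposition \ref{prop DMT} as my working model. The conjugation action of $\SO^*(2m)$ on the set of group homomorphisms $h\colon \UU\to \SO^*(2m)$ is a standard Harish-Chandra realisation, and a direct application of the generalisation of Witt's Theorem (the same version invoked earlier for $\calD_T$) shows that this action is transitive: any two complex structures $J$ on $\Lambda^{\real}_\RR$ compatible with the quaternionic structure and the polarisation form $E$ determined by $\calT$ can be related by an element of $\SO^*(2m)$. To compute the stabiliser of a fixed $h_0$ (equivalently a fixed $J_0$), I would observe that an element $g\in \SO^*(2m)$ satisfies $gJ_0g^{-1}=J_0$ iff $g$ commutes with $J_0$, which under the identification $(\Lambda^{\real}_\RR,J_0)\simeq \CC^g$ coming from $\mu$ in \eqref{eq mu} means $g$ is $\CC$-linear while preserving the Hermitian form $E(\cdot,J_0\cdot)+iE(\cdot,\cdot)$. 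The stabiliser is thus exactly $\U(m)$, using the realisation $\SO^*(2m)=\GL_m(\HH)\cap\U(m,m)$. Quotienting yields the first statement.

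Second, for the modular statement $\calA_{\calM,\calT}\simeq \Gamma(\calM,\calT)\backslash\calD_{\calM,\calT}$, I would follow the same pattern as in Proposition \ref{prop K_P}. The period map sends $A=(\Lambda^{\real}_\RR/\Lambda^{\real},J)\in\calA_{\calM,\calT}$, endowed with its PEL structure determined by $(\calM,\calT)$, to the point $h\in\calD_{\calM,\calT}$ encoding $J$. This is surjective: given $h$, the second characterisation in Proposition \ref{prop DMT} produces a normalised period matrix $X$ and hence a lattice $\Lambda$ via \eqref{eq_calM}, and the polarisation form $E$ is then recovered from $\calT$ via \eqref{eq_ET}; the Hodge--Riemann condition of Remark \ref{rmk E = polarisation} is built into the definition of $\calH_m$. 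Two period points $h,h'$ correspond to isomorphic polarised abelian varieties with the prescribed endomorphism structure if and only if they differ by an integral isometry of $\Lambda^{\real}$ preserving $E$, the $\HH_\QQ$-action through $\Phi$, and the lattice structure determined by $\calM$. Such isometries form an arithmetic subgroup of $\SO^*(2m)$, which we name $\Gamma(\calM,\calT)$, and the universal property of the moduli space then gives the claimed isomorphism.

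The main obstacle is the second part: verifying that the group of integral $\HH_\QQ$-linear automorphisms of $(\Lambda^{\real},E)$ is indeed arithmetic inside $\SO^*(2m)$ and that the quotient actually represents the moduli functor (rather than merely giving a coarse set-theoretic bijection). This requires care because $F=\HH_\QQ$ is a totally definite quaternion algebra, so that the signature constraints producing $\SO^*(2m)$ (rather than $\Sp_{2m}$ as in the indefinite case) emerge from the positivity condition on the Rosati involution, and because the attributes $(\calM,\calT)$ must be checked to be preserved exactly by an arithmetic, and not merely commensurable, subgroup. For these subtleties I would appeal to the analysis in \cite[Chapter 9]{bl}, where the corresponding period matrix and action description is worked out in the type II case, so that the arithmeticity and moduli interpretation follow from the general PEL formalism.
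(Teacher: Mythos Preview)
Your proposal is correct and follows essentially the same approach as the paper: invoke the generalised Witt's Theorem for transitivity of the $\SO^*(2m)$-action on $\DMT$, identify the stabiliser as $\U(m)$, and defer the arithmeticity and moduli-theoretic subtleties to \cite[Chapter 9]{bl}. In fact the paper itself gives no proof beyond the single sentence ``By Witt's Theorem again, the conjugation action of $\SO^*(2m)$ on $\DMT$ is transitive with $\U(m)$ being the stabiliser group'' and a citation, so your write-up is already more detailed than the paper's treatment; the only place to tighten is your stabiliser computation, where the conclusion $\U(m)$ (rather than a larger unitary group on $\CC^g=\CC^{2m}$) genuinely uses the $\HH$-linearity coming from $\SO^*(2m)\subset\GL_m(\HH)$, which you mention but could make more explicit.
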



\begin{remark}\label{rmk DMT}
    From item $2$ in Proposition~\ref{prop DMT}, the dimension of $\AMT$ is~$ m(m-1)/2$. 
\end{remark} 

We say that an abelian variety $A$ is very general in $\AMT$ if $A$ is simple with $\End_\QQ(A) \otimes_\QQ \RR = \HH$.
\subsection{Clifford algebra}\label{subsect Clifford}
The following details about Clifford algebras and their significant subgroups are taken from \cite[Chapters~9-11]{harv}, \cite[Chapters 1.1-1.5]{lm} and \cite[Chapter 4.1]{hu}.

Let $R$ be $\ZZ$, $\QQ$, $\RR$ or $\CC$.
Let $V$ be an $R$-module of rank $n$ equipped with a non-degenerate symmetric bilinear form $b\colon V \times V \rightarrow R$ (thus a quadratic form $q(v) = b(v,v)$). Suppose $V$ is of signature $(n_+, n_-)$ where $n = n_+ + n_-$.

\begin{definition}{\em \cite[Section 4.1.1]{hu}}\label{def Clifford}

    The {\em Clifford algebra} $\Cl(V)$ over  $(V, q)$ is defined as 
\[\Cl(V) := \T(V)/I(V)\]
where $\T(V) := \sum_{k = 0}^\infty \bigotimes^kV$ is the {tensor algebra} over $V$, and $I(V) := \langle v \otimes v - q(v): v \in V \rangle$ is an ideal.
\end{definition}

There is a universal property for Clifford algebras.
\begin{lemma}[Fundamental lemma for Clifford algebras]\label{lem FLCA}{\em \cite[Lemma 9.7]{harv}}

    Let $A$ be an associative algebra with unit over $R$.
    Let $\varphi\colon V \rightarrow A$ be a linear map from $V$ into $A$.
    If for all $v \in V$ we have
    \[\varphi(v)\varphi(v) - q(v)\cdot \Eins_A = 0,\]
    then $\varphi$ has a unique extension to a homomorphism of algebras from $\Cl(V)$ to $A$.
\end{lemma}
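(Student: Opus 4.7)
The plan is to reduce the statement to the universal property of the tensor algebra $\T(V)$ and the universal property of quotient algebras. Recall that $\T(V) = \bigoplus_{k\geq 0}\bigotimes^k V$ is the free associative unital $R$-algebra on $V$: any $R$-linear map from $V$ into an associative unital $R$-algebra $A$ extends uniquely to an $R$-algebra homomorphism from $\T(V)$ to $A$. Applying this to the given $\varphi\colon V \to A$ produces a canonical algebra homomorphism $\ti\varphi\colon \T(V) \to A$ with $\ti\varphi|_V = \varphi$.

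Next I would verify that $\ti\varphi$ annihilates the two-sided ideal $I(V) = \langle v\otimes v - q(v) \colon v \in V\rangle$ from Definition~\ref{def Clifford}. The hypothesis is exactly that for every $v \in V$ one has $\ti\varphi(v\otimes v - q(v)) = \varphi(v)\varphi(v) - q(v)\cdot \Eins_A = 0$, so each generator of $I(V)$ lies in $\ker(\ti\varphi)$. Since $\ker(\ti\varphi)$ is itself a two-sided ideal of $\T(V)$, it contains all of $I(V)$. By the universal property of quotients, $\ti\varphi$ descends to a well-defined $R$-algebra homomorphism $\ol{\varphi}\colon \Cl(V) = \T(V)/I(V) \longrightarrow A$ whose restriction to $V$ (viewed inside $\Cl(V)$ via the canonical map) recovers $\varphi$.

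Uniqueness is then automatic: $\Cl(V)$ is generated as an $R$-algebra by the image of $V$, so any two algebra homomorphisms $\Cl(V) \to A$ that agree on $V$ must agree on all products of elements of $V$ and hence everywhere. There is no real obstacle here beyond the standard universal property of $\T(V)$, which is valid over any commutative ring and therefore for all four choices $R \in \{\ZZ, \QQ, \RR, \CC\}$ considered; the content of the lemma is essentially bookkeeping, packaging the hypothesis on $\varphi$ into the statement that it kills the defining relations of $\Cl(V)$.
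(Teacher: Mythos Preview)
Your argument is correct and is the standard universal-property proof of this classical fact. Note that the paper does not actually supply its own proof of this lemma: it merely cites \cite[Lemma~9.7]{harv} and moves on, so there is nothing in the paper to compare against beyond observing that your proof is exactly the one found in the cited reference.
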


Clifford algebras admit the following distinguished involutions:
\begin{definition}
~\\
\vspace{-1.5em}
    \begin{enumerate}
        \item {\em \cite[Equation 1.7]{lm}} The {\em canonical automorphism}
$( \cdot )^-\colon \Cl(V) \longrightarrow \Cl(V)$ is an involution defined by extending the isometry $v \mapsto -v$ on $V$
to an automorphism on $\Cl(V)$.

\item {\em \cite[Equation 1.15]{lm}} Consider the involution $( \cdot )^t\colon \T(V) \rightarrow \T(V)$ such that for any $v_1, v_2, \cdots, v_d \in~V$,
\[(v_1 \otimes v_2 \otimes \cdots \otimes v_d)^t = v_d \otimes \cdots \otimes v_2 \otimes v_1.\]
Since $( \cdot )^t$ sends the ideal $I(V)$ to itself, it descends to an involution on $\Cl(V)$ called the {\em transpose}, which we still denote by $( \cdot )^t$.
    \end{enumerate}
\end{definition}

The Clifford algebra is a $\ZZ_2$-graded algebra: since $I(V)$ only contains elements of even degree with respect to the natural $\ZZ$-grading of $T(V)$, the~$\ZZ$-grading descends to a $\ZZ_2$-grading for $\Cl(V)$. 
In particular, we have
\[\Cl(V) = \Cl^+(V) \oplus \Cl^-(V),\]
where $\Cl^+(V)$ is the {\em even part} of $\Cl(V)$ spanned by the classes of the even degree elements in $T(V)$, and $\Cl^-(V)$ is the {\em odd part} spanned by the classes of odd degree elements in $T(V)$.

\begin{remark}\label{rmk dim Clifford}
    By forgetting the Clifford multiplication, $\Cl(V)$ is isomorphic to the exterior algebra~$\bigwedge V$ as modules or vector spaces. Therefore, $2\dim\prnt{\Cl^+(V)} = \dim\prnt{\Cl(V)} = \dim\prnt{\bigwedge V} = 2^n$.
\end{remark}

Being $\ZZ_2$-graded algebras, Clifford algebras have a {\em graded tensor product} $\hotimes$.
Disregarding the multiplication, the graded tensor product of two graded algebras is the ordinary tensor product of graded modules \cite[Section IV.2]{la}.
In particular, the graded tensor product of two Clifford algebras is also a Clifford algebra with the usual Clifford multiplication.

\begin{lemma}[Gluing of Clifford algebras]\label{lem glueCl}{\em \cite[Lemma 1.7, Theorem 1.8]{la}}\index{Clifford algebra! gluing of $\sim$}

    Let $(V,q)$ and $(V',q')$ be two $R$-vector spaces/modules equipped with a quadratic form $q$ and $q'$ respectively.
    Then by the fundamental lemma for Clifford algebra, the linear map 
    \begin{align*}
        V \oplus V' &\longrightarrow \Cl(V)\hotimes \Cl(V')\\
        (v, v') &\longmapsto v \otimes \Eins + \Eins \otimes v'
    \end{align*}
     extends to a morphism of $\ZZ_2$-graded algebras
    \[f\colon \Cl(V \oplus V') \xrightarrow{~\simeq~} \Cl(V)\hotimes \Cl(V'),\]
    which is in fact an isomorphism.
\end{lemma}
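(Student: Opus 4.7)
The plan is to construct the map $f$ via the fundamental lemma (Lemma 2.2.2) and then check separately that it is graded and bijective. First I would consider the linear map $\varphi\colon V \oplus V' \to \Cl(V) \hotimes \Cl(V')$ given by $(v,v') \mapsto v \otimes \Eins + \Eins \otimes v'$ and verify that the defining Clifford identity $\varphi(w)^2 = (q \oplus q')(w)\cdot \Eins$ holds for all $w = (v,v')$. The key computation is that in the graded tensor product, because both $v$ and $v'$ live in the odd part, one has $(\Eins \otimes v')(v \otimes \Eins) = -\,v \otimes v'$; hence the cross terms cancel, leaving $q(v)\cdot\Eins + q'(v')\cdot \Eins = (q \oplus q')(v,v')\cdot \Eins$. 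By Lemma 2.2.2 this produces a unique algebra homomorphism $f\colon \Cl(V\oplus V') \to \Cl(V)\hotimes \Cl(V')$ extending $\varphi$.

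Next I would check that $f$ respects the $\ZZ_2$-grading. Since $f$ sends the degree-one generators $V \oplus V'$ into the odd part of $\Cl(V)\hotimes \Cl(V')$, and both sides are generated in degree one, $f$ preserves the $\ZZ_2$-grading on the nose. This step is routine once the construction of $f$ is in place.

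The remaining task is to prove $f$ is a bijection. My approach is to combine dimension counting with a surjectivity argument. By Remark 2.2.4, $\dim \Cl(V \oplus V') = 2^{n + n'}$ and equally $\dim\bigl(\Cl(V) \hotimes \Cl(V')\bigr) = 2^n \cdot 2^{n'} = 2^{n + n'}$, so it suffices to check surjectivity. Since $f(v,0) = v \otimes \Eins$ and $f(0,v') = \Eins \otimes v'$ for $v \in V$, $v' \in V'$, the image contains a set of algebra generators of $\Cl(V)\hotimes\Cl(V')$; as $f$ is an algebra morphism, it is therefore surjective. Equality of dimensions then upgrades this to an isomorphism.

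The main obstacle is really just the sign computation in the graded tensor product: one must carefully use the Koszul-type rule $(a \otimes b)(a' \otimes b') = (-1)^{|b||a'|} (aa') \otimes (bb')$ on homogeneous elements, rather than the ungraded tensor product multiplication, so that the cross terms in $\varphi(w)^2$ cancel. Once that convention is fixed, every other step is formal.
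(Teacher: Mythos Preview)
Your argument is correct and is precisely the standard proof of this fact. Note, however, that the paper does not supply its own proof of this lemma: it is stated with a citation to Lam's \emph{Introduction to Quadratic Forms over Fields} \cite[Lemma~1.7, Theorem~1.8]{la} and left unproved in the text. Your write-up matches the argument one finds in that reference: verify the Clifford identity in the graded tensor product using the Koszul sign rule so that the cross terms cancel, invoke the universal property (Lemma~\ref{lem FLCA}) to extend to an algebra map, and then conclude bijectivity from surjectivity plus the rank count of Remark~\ref{rmk dim Clifford}. One small remark: since the ambient ring $R$ in the paper is allowed to be $\ZZ$, you might phrase the final step as ``a surjection of free $R$-modules of equal finite rank is an isomorphism'' rather than appealing to dimension over a field, but this is cosmetic.
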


A Clifford algebra with Clifford multiplication can be identified with a product of matrix algebras with the usual matrix multiplication.
First consider the case $R = \RR$. Let 
$\RR^{(n_+, n_-)}$ be the $\RR$-module of rank $n$ with the standard quadratic form of signature~$({n_+, n_-})$ given by
$v_1^2 + \cdots + v_{n_+}^2 - v_{n_++1}^2 - \cdots v_{n}^2$, 
and let \[\Cl(n_+,n_-):= \Cl(\RR^{(n_+, n_-)}).\]
There is a pattern for the product of matrix algebras that is isomorphic to~$\Cl(n_+, n_-)$ depending on $(n_+, n_-) \mod 8$: for details, see \cite[Theorem 11.3, Table~11.5]{harv}.
Furthermore, the identities \cite[Theorem 9.38, 9.43]{harv}
\begin{align}
    \label{eq clifford id1}\Clp(n_+ + 1, n_-) &\simeq  \Cl(n_+, n_-)\\
    \label{eq clifford id2}\Cl(n_+, n_-+1) &\simeq \Cl(n_-, n_+ +1)   
\end{align}
allow the even part of a Clifford algebra to be identified with a product of matrix algebras.
\begin{example}\label{ex Cl(2,6)}
    We will focus on the case $V \simeq \RR^{(2,6)}$. 
    Referencing \cite[Table 11.5]{harv}, we have
    \[\Cl^+(2,6) \simeq \Cl(2,5) \simeq M_4(\HH)\times M_4(\HH).\]
\end{example}

Now let us consider the case $R = \QQ$. Let $(V,q)$ be any $\QQ$-vector space.
The structure of $\Cl^+(V)$ partly depends \cite[Theorem 7.7]{vg} \cite[Section V.2]{la} on the determinant of $q$. 
We will discuss this with more detail at the beginning of Section \ref{subsect explicit F}.
When we further restrict to $R = \ZZ$, the image of $\Cl_\ZZ(n_+, n_-) := \Cl(\ZZ^{(n_+. n_-)})$ is contained in a maximal order in $\Cl_\QQ(n_+, n_-)$.

In fact, the above are identifications of $\ZZ_2$-graded algebras. 
The corresponding $\ZZ_2$-grading for the matrix algebras is called the {\em checkerboard grading}. In particular, for a ring $S$, the even and the odd parts of $M_d(S)$ consist of matrices such that no two adjacent entries, whether in the same row or in the same column, are both non-zero \cite[Section IV.2]{la}. 
By considering the Kronecker product of matrices~$\hotimes$, then $M_d(\mathscr{A}) \simeq M_d(S) \hotimes \mathscr{A}$ also admits a checkerboard grading. 
Its degree $0$ part and its degree $1$ part are respectively given by
    \[(\wh{M}_d(\mathscr{A}))_0 = \begin{bmatrix}
            \mathscr{A}_0 & \mathscr{A}_1 & \mathscr{A}_0 & \\
            \mathscr{A}_1 & \mathscr{A}_0 & \mathscr{A}_1 & \\
            \mathscr{A}_0 & \mathscr{A}_1 & \mathscr{A}_0 & \\[-5pt]
             & & &\hspace{-0.8em}\ddots
        \end{bmatrix}, ~
      (\wh{M}_d(\mathscr{A}))_1 = \begin{bmatrix}
            \mathscr{A}_1 & \mathscr{A}_0 & \mathscr{A}_1 & \\
            \mathscr{A}_0 & \mathscr{A}_1 & \mathscr{A}_0 & \\
            \mathscr{A}_1 & \mathscr{A}_0 & \mathscr{A}_1 & \\[-5pt]
             & & &\hspace{-0.8em}\ddots
        \end{bmatrix}.  
    \]

We end this subsection with the definition and properties of the spin group. We consider to $R = \RR$ and $V \simeq \RR^{(n_+,n_-)}$.

\begin{definition}{\em \cite[Section 10]{harv}}

    The {\em spin group} of $V$
    is defined as 
    \[\Spin(V) := \chave{x \in \Cl^*(V) \cap \Cl^+(V): x(x^-)^t=1, \tAd_x(v) \in V \text{ for all } v\in V},\]
    where $\Cl^*(V)$ is the multiplicative group of units of $\Cl(V)$, 
    and $\tAd$ is its twisted adjoint representation given by
     \begin{align*}
        \tAd\colon \Cl^*(V) &\longrightarrow \GL(\Cl(V))\\
        x &\longmapsto \tAd_x(\cdot):= \left [y \mapsto \prnt{x^-\cdot y\cdot x^{-1}}\right ].
    \end{align*}
        If $\Spin(V)$ is not connected, then its identity component is denoted by $\Spin^+(V)$. 
\end{definition}

\begin{remark}
    To avoid any confusion, we would like to emphasise that the $+$ decoration in $\Spin^+(V)$ is used in a similar sense as the $+$ decoration in $\SO^+(V)$, rather than as in $\Cl^+(V)$.
\end{remark}

We will only consider $\Spin(V) \subset \Cl^+(V) \simeq \Cl^+(n_+,n_-)$ when  $n_+ - n_- \equiv 0 \mod 4$. 
In this case, the spin group has two connected components. 
Its identity component $\Spin^+(V)$ has three distinct double cover group homomorphisms.
One of them is the restriction of the twisted adjoint representation, which is just the usual adjoint action $\Ad$ as $(\cdot)^-$ is trivial on $\Cl^+(V)$ by definition. 
The image of $\Ad$ is the group~$\SO^+(V)$.
On the other hand, $\Cl^+(V)$ is isomorphic to the product of two copies of a matrix algebra $M_d(W_+) \times M_d(W_-)$ where $W_\pm$ are complex vector spaces called the {\em spaces of half-spinors}. 
The image of $\Spin^+(V)$ is $\SO^*(W_+) \times \SO^*(W_-)$ and the projections $\varphi_\pm:~\Spin^+(V) \rightarrow \SO^*(W_\pm)$ down to each copy are double cover homomorphisms called the {\em half-spin representations} of $\Spin^+(V)$. 
\begin{center}

\tikzset{every picture/.style={line width=0.75pt}} 

\begin{tikzpicture}[x=0.75pt,y=0.75pt,yscale=-1,xscale=1]

\draw    (174,42) -- (161.89,86.71) ;
\draw [shift={(161,88.5)}, rotate = 285.31] [color={rgb, 255:red, 0; green, 0; blue, 0 }  ][line width=0.75]    (10.93,-3.29) .. controls (6.95,-1.4) and (3.31,-0.3) .. (0,0) .. controls (3.31,0.3) and (6.95,1.4) .. (10.93,3.29)   ;
\draw    (200,42.5) -- (400.53,86.75) ;
\draw [shift={(400.5,86.5)}, rotate = 190] [color={rgb, 255:red, 0; green, 0; blue, 0 }  ][line width=0.75]    (10.93,-3.29) .. controls (6.95,-1.4) and (3.31,-0.3) .. (0,0) .. controls (3.31,0.3) and (6.95,1.4) .. (10.93,3.29)   ;
\draw    (186,43.5) -- (320.89,86.21) ;
\draw [shift={(320,86.5)}, rotate = 200] [color={rgb, 255:red, 0; green, 0; blue, 0 }  ][line width=0.75]    (10.93,-3.29) .. controls (6.95,-1.4) and (3.31,-0.3) .. (0,0) .. controls (3.31,0.3) and (6.95,1.4) .. (10.93,3.29)   ;

\draw (152.5,18.4) node [anchor=north west][inner sep=0.75pt]    {$\Spin{}^{+}( V) \ \hookrightarrow \Cl{}^{+}( V) \simeq M_{d}( W_{+}) \ \times \ M_{d}( W_{-})$};
\draw (125,90.9) node [anchor=north west][inner sep=0.75pt]    {$\SO{}^{+}( V)$};
\draw (305,90.4) node [anchor=north west][inner sep=0.75pt]    {$\SO{}^{*}( W_{+})$};
\draw (390,89.9) node [anchor=north west][inner sep=0.75pt]    {$\SO{}^{*}( W_{-})$};
\draw (141.5,54.4) node [anchor=north west][inner sep=0.75pt]    {$\Ad$};
\draw (230.5,65.4) node [anchor=north west][inner sep=0.75pt]    {$\varphi _{+}$};
\draw (280.5,45.4) node [anchor=north west][inner sep=0.75pt]    {$\varphi _{-}$};
\draw (335, 65) node[rotate=90] {\scalebox{4}[1.5]{$\subset$}};
\draw (420, 65) node[rotate=90] {\scalebox{4}[1.5]{$\subset$}};

\end{tikzpicture}
    \captionof{diagram}{Three distinct double cover homomorphisms of $\Spin^+(V)$ when $n_+ - n_-$ is divisible by~$4$.}\label{pic_spin rep}
\end{center}

\subsection{Kuga-Satake construction}\label{subsect KS}

In this subsection, we explicitly construct a Kuga-Satake (KS) variety from a lattice polarised K3 surface. The main references are \cite[Section 4.2]{hu} and \cite{vg}.
The starting ingredient of the KS construction is a {\em Hodge structure of K3 type}, 
$\ie$ a weight two Hodge structure $V$ with $\dim V^{2,0} = 1$ and a quadratic form $q$ of signature $(\dim V - 2, 2)$ which is positive definite on $V^{1,1}$.

Let $\prnt{X, j\colon P \hookrightarrow \Pic(X)}$ be a K3 surface polarised by a rank $r$ lattice $P$, and let $T:= P^\perp_{\Lambda_{K3}}$ be its transcendental lattice which is of signature $(2, 20-r)$.
Note that $T$ has a Hodge structure of K3 type inherited from that of $H^2(X,\ZZ)$ with the intersection form, which is determined by choosing the $T^{2,0}$ part to be $H^{2,0}(X)\subset T_\CC$.
Using properties of Clifford algebras, we will construct from $(T,q)$, where $-q$ is the restriction of the intersection form, an abelian variety $\KS(T)$ called the KS variety.
\begin{remark}
    The identities (\ref{eq clifford id1}) and (\ref{eq clifford id2}) imply $\Cl^+(n_+, n_-) \simeq \Cl^+(n_-, n_+)$, so it does not matter whether we choose the quadratic form for $T$ to be $q$ or $-q$.
\end{remark}

By Remark \ref{rmk dim Clifford}, the Clifford algebra $\Cl(T)$ over $T$ is a lattice of rank $2^{22-r}$.
The quotient 
\begin{equation}
    \TT:=\Cl^+(T_\RR)/\Cl^+(T) \label{eq TT}
\end{equation}
is therefore a torus of real dimension $2^{21-r}$.
Moreover, $T^{2,0}$ determines a complex structure on $\TT$.
Pick a generator ~$\sigma=e_1 + ie_2$ of $T^{2,0}$ such that $e_1, e_2 \in T_\RR$ and $q(e_1) = 1$.
Since~$q(e_1+i e_2) = 0$, the vectors~$e_1$ and $e_2$ are orthonormal.
Set $J = e_1e_2$.  
One can check {\cite[Lemma~5.5]{vg}} that $J$ is an element in $\Spin^+(T_\RR)$ satisfying $J = e_1e_2 = -e_2e_1$ and~$J^2 = -1$. Furthermore \cite[Proposition 6.3.1]{vg}, $J$ is independent of the choice of the orthonormal basis $e_1, e_2$.
Under the homomorphism $\Ad:\Spin^+(T_\RR) \rightarrow \SO^+(T_\RR)$, the element $J$ then gives a complex structure on $\Cl^+(T_\RR)$ by left multiplication.

Finally, we give a construction of a polarisation\index{KS variety! polarisation of $\sim$} on the complex torus $(\TT, J)$.
Choose two orthogonal vectors $f_1, f_2 \in T$ with $q(f_i)>0$, and let $\alpha = f_1f_2$.
Consider the pairing $E$ with
\begin{align*}
    E \colon \Cl^+(T) \times \Cl^+(T) &\longrightarrow \ZZ\\
    (v,w) &\longmapsto \tr(\alpha (v^-)^tw) =  \tr(\alpha v^tw),\numberthis \label{eq E tr}
\end{align*}
where $\tr$ is the trace function for linear maps.
One can check \cite[Proposition 5.9]{vg} that the real extension $E_\RR$ of $E$ is an alternating form, and that the required conditions (Remark \ref{rmk E = polarisation}) for $E$ to be a polarisation of $(\TT,J)$ are satisfied upon choosing the correct sign of $\alpha$.

Therefore, $(\Cl^+(T_\RR)/\Cl^+(T), J, E)$ is an abelian variety of complex dimension $2^{20-r}$.
We call this abelian variety a KS variety, and denote it by $\KS(X)$ or $\KS(T)$. We have suppressed $\alpha$ in the notation because it is always clear what $\alpha$ is, or else the choice of polarisation class is unimportant.

\begin{remark}{\em \cite[Remark 4.2.3]{hu}} \label{rmk Cl+ = Cl-}

   Alternatively, one can define the KS variety from the odd part of the Clifford algebra $\Cl{}^-(V_\RR)$ instead of the even part: for any lattice $V$,
    fixing a vector $w$ in $V$ gives an isomorphism of $\RR$-vector space
    \begin{align*}
        \Cl{}^+(V_\RR) &\xrightarrow{\simeq~} \Cl{}^-(V_\RR)\\
        v &\longmapsto v\cdot w
    \end{align*}
    Moreover, the KS variety defined from $\Cl{}^+(V)$ is isogenous to the one defined from~$\Cl{}^-(V)$.
\end{remark}

\begin{remark}{\em \cite[Proposition 5.2.1, Theorem 5.3.2]{bl}}\label{rmk choice of alpha}

The choices of $\alpha$ in the KS construction can be described explicitly: given an abelian variety $A$, there is an
isomorphism of~$\QQ$-vector spaces 
    \[\NS_\QQ(A) \simeq \End^s_\QQ(A).\]
Here, the set $\End^s_\QQ(A)$ is the set of symmetric elements in $\End_\QQ(A)$, $\ie$ it consists of elements in~$\End_\QQ(A)$ which are stable under the Rosati involution $\rho$. 
\end{remark}

\section{Construction of modular mapping}\label{sect F}

\subsection{Explicit construction}\label{subsect explicit F}

In this section, we will construct explicitly the modular mapping $F: \calK_P \rightarrow \AMT$, where $\calK_P$ is a moduli space of K3 surfaces polarised by a rank $14$ lattice $P$, and $\AMT$ is a moduli space of abelian $8$-folds with totally definite quaternion multiplication associated to the pair $(\calM, \calT)$.
Note that the construction only works for $K_P$ satisfying the following condition:
\begin{equation}
    \textbox{the determinant of the quadratic form on the polarisation lattice  $P$ is a square in $\QQ^*$.}\tag{$\ast\ast$}\label{assumption2}
\end{equation}
Note that by \cite[Proposition 1.6.1]{n}, this is the same as requiring the determinant of the quadratic form on the transcendental lattice $T$ of $P$ to be a square in $\QQ^*$.
Assumption (\ref{assumption2}) is needed as our construction requires the the $\RR$-algebra isomorphism described in Example \ref{ex Cl(2,6)} to respect the rational structures on the $\RR$-algebras \cite[Theorem 7.7]{vg} \cite[Section V.2]{la}, $\ie$ we need
\[\Cl^+_\QQ(T_\QQ) \simeq M_4(\mathbf{F}) \times M_4(\mathbf{F}) \numberthis \label{eq Q Clifford iso}\]
where $\mathbf{F} \otimes_\QQ \RR = \HH$.

To begin with, let us apply the KS construction on the members of $\calK_P$. 
It is clear that the real torus~$\TT$ defined in Equation~(\ref{eq TT}) is the same for any $X \in \calK_P$ and $\alpha \in \Cl^+(T)$.
In fact, we may choose the same~$\alpha$ for every~$X \in \calK_P$ which gives rise to the same alternating form $E$.
Although the corresponding polarisation class in $\NS_\QQ(\KS(X))$ also depends on the complex structure $J$ of $\KS(X)$ as in Remark~\ref{rmk E = polarisation}, the polarisation type, which is discrete, remains constant as~$J$ varies in the family.
The construction, therefore, gives rise to a family of Kuga-Satake varieties $\KS(X)$ with $\alpha$ fixed, which embeds into the moduli space $\calA_{64}$ of polarised abelian $64$-folds.
The following theorem shows how we may derive an abelian $8$-fold $A_+$ with totally definite quaternion multiplication up to isogeny from the KS variety associated to a very general K3 surface in $\calK_P$.

\begin{theorem}\label{thm decomposeKS}
 For a very general K3 surface $X$ in the family $\calK_P$, there is a simple decomposition of $\KS(X)$ given by the isogeny
    \[\KS(X) \sim A_+^4 \times A_-^4,\]
    where $A_+$ and $A_-$ are non-isogenous simple abelian $8$-folds. Moreover, 
    \[\End_\QQ(A_+) \simeq \mathbf{F} \simeq \End_\QQ(A_-)\]
    with $\mathbf{F} \otimes_\QQ \RR = \HH$.
\end{theorem}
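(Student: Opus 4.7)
The plan is to exploit the block structure of the even Clifford algebra coming from Example~\ref{ex Cl(2,6)}: $\Cl^+(T_\RR) \simeq M_4(\HH) \times M_4(\HH)$. The center of $\Cl^+(T_\QQ)$ is generated by the Clifford volume element $\omega = e_1 \cdots e_8$, which satisfies $\omega^2 = \disc T$. Assuming this splits rationally, one obtains a pair of orthogonal central idempotents $\varepsilon_\pm \in \Cl^+(T_\QQ)$. Because they are central, they commute with the complex structure (left multiplication by $J$), so the decomposition $\Cl^+(T_\QQ) = \varepsilon_+\Cl^+(T_\QQ) \oplus \varepsilon_-\Cl^+(T_\QQ)$ is one of rational Hodge structures and integrates to an isogeny $\KS(X) \sim B_+ \times B_-$ of $32$-dimensional abelian varieties, each with rational vector space $M_4(\HH_\QQ)$.

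Next I would decompose each $B_\pm$ further via the column structure: as a left module over itself, $M_4(\HH_\QQ)$ is the fourfold direct sum of its standard column module $\HH_\QQ^4$, and left multiplication by $J$ respects this column splitting. Hence $B_\pm \sim A_\pm^4$ where $A_\pm$ is an $8$-dimensional abelian variety with rational vector space $\HH_\QQ^4$. Right multiplication on $\HH_\QQ^4$ by $\HH_\QQ$ commutes with the left $M_4(\HH_\QQ)$-action and in particular with $J$, which gives an embedding $\HH_\QQ \hookrightarrow \End_\QQ(A_\pm)$.

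To upgrade this inclusion to an equality and to obtain simplicity, I invoke the very generality assumption. For very general $X \in \calK_P$ the classical theorem of Zarhin gives $\MT(T_\QQ) = \SO(T_\QQ)$, so $\Hdg(\KS(X))$ is the spin cover $\Spin(T_\QQ)$ acting on $\Cl^+(T_\QQ)$ by left multiplication. Its two projections onto the factors $\varepsilon_\pm \Cl^+(T_\QQ) \simeq M_4(\HH_\QQ)$ are the half-spin representations $\varphi_\pm$ of Diagram~\ref{pic_spin rep}. Each $\varphi_\pm$ is absolutely irreducible with quaternionic commutant, so the $\QQ$-linear span of its image is all of $M_4(\HH_\QQ) \subset M_{16}(\QQ) \simeq \End_\QQ(\HH_\QQ^4)$; the double centralizer theorem then identifies the commutant with $\HH_\QQ$ acting on the right. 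Hence $\End_\QQ(A_\pm) = \HH_\QQ$, and since this is a division algebra, $A_\pm$ is simple. Finally, any isogeny $A_+ \to A_-$ would yield a $\Spin(T_\QQ)$-equivariant isomorphism between the two half-spin representations, which are inequivalent; so no such isogeny exists.

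The main obstacle is justifying the rationality of the central splitting, namely that $\disc T \in \QQ^{\times 2}$: otherwise the center of $\Cl^+(T_\QQ)$ is a genuine quadratic field, the two factors are Galois-conjugate, and one gets $\KS(X) \sim A^8$ for a single $A$ instead. A secondary arithmetic subtlety is ensuring that each $\QQ$-factor is actually $M_4(\HH_\QQ)$ rather than a matrix algebra over a different quaternion algebra over $\QQ$; this is governed by a Hasse-type invariant of the quadratic form on $T_\QQ$. Both are explicit lattice-theoretic conditions on $T$ that must be verified for the specific lattices $P$ of interest, and their failure is precisely what would undermine the sought-after pairing with $\AMT$.
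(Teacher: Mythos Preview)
Your argument is sound and reaches the same conclusion, but the organisation is inverted relative to the paper. The paper works top-down: it invokes van Geemen's identities $\End_\QQ(\KS(X)) \simeq \End_{\MT}(\Cl^+(T_\QQ))$ and $\Cl^+(T_\QQ) \simeq \End_{\CSpin^+}(\Cl^+(T_\QQ))$, uses Zarhin (for very general $X$) to identify the Mumford--Tate group with $\CSpin^+(T_\QQ)$, and concludes $\End_\QQ(\KS(X)) \simeq \Cl^+(T_\QQ) \simeq M_4(\HH_\QQ)\times M_4(\HH_\QQ)$; then a single appeal to the Poincar\'e reducibility corollary (Theorem~\ref{thm poincare}) yields the isogeny decomposition, simplicity, non-isogeny of $A_\pm$, and $\End_\QQ(A_\pm)\simeq\HH_\QQ$ all at once. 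You instead build the decomposition bottom-up from the central idempotents and the column module structure, and then recover the endomorphism algebra via the double centraliser on the half-spin representations; your non-isogeny argument from the inequivalence of $\varphi_\pm$ is essentially what Theorem~\ref{thm poincare} encodes in the other direction. The paper's route is shorter because the two van Geemen lemmas package your double-centraliser step; your route has the virtue of making the explicit idempotent decomposition visible up front, which is exactly what the paper does \emph{after} the theorem (Equations~(\ref{eq A1})--(\ref{eq varphi})) to actually construct $F$. Your flagged obstacles about $\disc T$ being a rational square and the Brauer class of each factor being that of $\HH_\QQ$ are genuine: the paper treats the isomorphism $\Cl^+(T_\QQ)\simeq M_4(\HH_\QQ)^2$ as given by Example~\ref{ex Cl(2,6)} and the subsequent remark on restriction to $\QQ$, and verifies it explicitly only for the specific lattice of Section~\ref{sect MAGMA}; you are right that for a general rank-$14$ polarisation this requires checking.
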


Before we give the proof, let us first recall the following consequence of the Poincar\'e's Complete Reducibility Theorem which will be useful for the rest of the paper.
    \begin{theorem}{\em \cite[Corollary 5.3.8]{bl}}\label{thm poincare}
    
         An abelian variety $A$ has a simple decomposition $A \sim \Pi_{i=1}^k A_i^{n_i}$ where $A_i$ is non-isogenous to $A_j$ for~$i \neq j$, if and only if 
    \[\End_\QQ(A) \simeq \Pi_{i=1}^k M_{n_i}\prnt{\End_\QQ(A_i)}.\]
    \end{theorem}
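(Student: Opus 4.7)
The plan is to split the equivalence into two implications and reduce each to standard structural facts about semisimple $\QQ$-algebras. For the forward direction, I would transfer under the isogeny $A \sim \prod_i A_i^{n_i}$, using that $\End_\QQ(\cdot)$ is an isogeny invariant, and then block-decompose
\[\End_\QQ\prnt{\prod_{i=1}^k A_i^{n_i}} \simeq \bigoplus_{i, i'} \Hom_\QQ(A_i^{n_i}, A_{i'}^{n_{i'}}).\]
The off-diagonal blocks vanish because any nonzero morphism between two simple abelian varieties is an isogeny, contradicting the hypothesis that the $A_i$ are pairwise non-isogenous. Each diagonal block simplifies to $M_{n_i}(\End_\QQ(A_i))$, producing the claimed product decomposition.

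For the converse, I would invoke Poincar\'e's Complete Reducibility Theorem to produce an intrinsic simple decomposition $A \sim \prod_{j=1}^\ell B_j^{m_j}$ with the $B_j$ pairwise non-isogenous simple. Applying the forward direction to this intrinsic decomposition yields $\End_\QQ(A) \simeq \prod_j M_{m_j}(\End_\QQ(B_j))$, and each $\End_\QQ(B_j)$ is a division $\QQ$-algebra since $B_j$ is simple. The hypothesised expression $\End_\QQ(A) \simeq \prod_i M_{n_i}(\End_\QQ(A_i))$ provides a second Wedderburn decomposition of the same semisimple $\QQ$-algebra, so by uniqueness of the Wedderburn decomposition the two lists of simple factors agree after a permutation $\sigma$, giving $k = \ell$, $n_i = m_{\sigma(i)}$, and $\End_\QQ(A_i) \simeq \End_\QQ(B_{\sigma(i)})$.

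The main obstacle I anticipate is promoting these algebra isomorphisms to actual isogenies $A_i \sim B_{\sigma(i)}$, since in general a simple abelian variety is not determined up to isogeny by its endomorphism algebra alone. I would bridge this gap geometrically: the central primitive idempotent of $\End_\QQ(A)$ corresponding to the Wedderburn factor $M_{n_i}(\End_\QQ(A_i))$ cuts $A$ (up to isogeny) into the isotypic subvariety spanned by all copies of $A_i$ in the given decomposition, while the same idempotent, read through the intrinsic Poincar\'e decomposition, cuts $A$ into the isotypic subvariety spanned by all copies of $B_{\sigma(i)}$. Matching these two descriptions of the same subvariety forces $A_i \sim B_{\sigma(i)}$, thereby identifying the two decompositions and completing the proof.
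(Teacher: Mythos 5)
The paper does not itself prove this theorem; it is quoted from [BL, Corollary~5.3.8] without proof, so there is no paper argument to compare against. Your forward direction is the standard and correct argument: transfer $\End_\QQ$ along the isogeny, decompose $\End_\QQ\prnt{\prod_i A_i^{n_i}}$ into blocks $\Hom_\QQ(A_i^{n_i}, A_j^{n_j})$, observe that the off-diagonal blocks vanish since a nonzero morphism between simple abelian varieties is an isogeny, and identify each diagonal block with $M_{n_i}(\End_\QQ(A_i))$.

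The converse direction is where the trouble lies, and it is not merely a gap in your argument: the converse as literally stated is false, so no patch can succeed. Take $A = E_1$ and $A_1 = E_2$ to be two non-isogenous elliptic curves without complex multiplication, so $\End_\QQ(E_1) = \End_\QQ(E_2) = \QQ$; then $\End_\QQ(A) = \QQ = M_1(\End_\QQ(A_1))$, yet $A$ is not isogenous to $A_1$. Your proposed geometric bridge does not resolve this. The central primitive idempotent of $\End_\QQ(A)$ indeed cuts out an isotypic component of $A$, but there is no second ``reading'' of that idempotent that locates $A_i$ inside $A$: the $A_i$ are supplied only through an abstract algebra isomorphism and do not a priori embed in $A$ at all, so the two descriptions you want to match are not descriptions of a common object. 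Your own caveat---that a simple abelian variety is not determined up to isogeny by its endomorphism algebra---is the whole obstruction, and the idempotent argument does not escape it. What [BL, Corollary~5.3.8] actually gives, and all the paper needs (e.g.\ to deduce $\KS(X) \sim A_+^4 \times A_-^4$ from $\End_\QQ(\KS(X)) \simeq M_4(\HH_\QQ) \times M_4(\HH_\QQ)$), is the forward direction combined with Poincar\'e's Complete Reducibility and Wedderburn uniqueness: every $A$ admits a simple decomposition, and the data $(k;\, n_1,\dots,n_k;\, D_1,\dots,D_k)$ of that decomposition is uniquely determined by the semisimple algebra $\End_\QQ(A)$, defining the simple factors $A_i$ as those appearing in that decomposition rather than as pre-specified inputs. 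You already prove that weaker, correct statement; the attempted strengthening to recover externally given $A_i$ up to isogeny should be dropped.
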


\begin{proof}[Proof of Theorem \ref{thm decomposeKS}]
    By the Global Torelli theorem and \cite[Corollary 3.6]{vg}, we have
    \[\End_\QQ(\KS(X)) \simeq \End_\MT(\Cl^+(T_\QQ)),\]
    where for any rational Hodge structure on $V$, $\End_{\MT}(V)$ consists of the vector space endomorphisms of $V$ that commute with the action of the Mumford-Tate group\index{Mumford-Tate group} $\MT(V)$.

    On the other hand, \cite[Lemma 6.5]{vg} we have
    \[\Cl^+(T_\QQ) \simeq \End_{\CSpin^+}(\Cl^+(T_\QQ)),\]
    where $\End_{\CSpin^+}(\Cl^+(T_\QQ))$ are the vector space endomorphisms of $\Cl^+(T_\QQ)$ that commute with the action of $\CSpin^+(T_\QQ)$, the identity component of the {\em special Clifford group} \cite[Section 10]{harv} that sits in the short exact sequence
    \[1 \longrightarrow \Spin(V) \longrightarrow \CSpin(V) \longrightarrow \RR^* \longrightarrow 1.\]
    If the Mumford-Tate group $\MT(\Cl^+(T_\QQ))$ is the special Clifford group $\CSpin^+(T_\QQ)$, then we are done by considering Equation~(\ref{eq Q Clifford iso}) and Theorem \ref{thm poincare}: $\KS(X) \sim A_+^4 \times A_-^4$ with $\dim_\CC(A_+) = \dim_\CC(A_-) = 8$.
    Indeed by a result of Zarhin \cite[Theorem 3.3.9, 6.4.9]{hu}, for a very general K3 surface $X$, we have
    \[\MT(T_\QQ) =  \MT(H^2(X,\QQ)) = \Ogrp(T_\QQ).\]
    Therefore by \cite[Proposition 6.3]{vg}, we have $\MT(\Cl^+(T_\QQ)) = \CSpin^+(T_\QQ)$.
\end{proof}

We claim that the isogeny \[f:=\KS(X)\xrightarrow[]{\sim} A_1 \times \cdots \times A_4 \times A_5 \times \cdots \times A_8,\] where $A_1, \cdots, A_4 \sim A_+$ and $A_5, \cdots, A_8 \sim A_-$, can be chosen in a compatible way for all~$X \in \calK_P$ so that $[X \mapsto \KS(X) \mapsto A_1]$ extends to the desired modular mapping $F$.
To prove the claim, we start by exploring all possibilities of the isogeny $f$ for $X$ very general.
\begin{theorem}\label{thm simple ab =s ymmetric idempotent}{\em \cite[Theorem 5.3.2]{bl}}

There is a bijection between the set of abelian subvarieties of an abelian variety $A$ and the set of symmetric idempotents in $\End_\QQ(A)$: 
let $\varepsilon$ be an idempotent in $\End^s_\QQ(A)$, $\ie$ $\varepsilon$ satisfies $\varepsilon^2 = \varepsilon$. 
If $d$ is the smallest positive integer such that~$d\varepsilon$ is in the order $\End(A)$, then under the above bijection, $\varepsilon$ corresponds to the abelian subvariety $\im(d\varepsilon)\subset A$.
\end{theorem}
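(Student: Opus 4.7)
The plan is to exhibit the two directions of the bijection explicitly and then check they are mutual inverses, leaning on the polarisation of $A$ to turn a subvariety into an endomorphism.

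First I would construct the map from abelian subvarieties to symmetric idempotents. Fix the polarisation $\phi_L \colon A \to A^\vee$ induced by the ample line bundle $L$ on $A$. Given an abelian subvariety $\iota\colon B \hookrightarrow A$, the pulled-back polarisation $\iota^* L$ is a polarisation on $B$, giving an isogeny $\phi_{\iota^* L}\colon B \to B^\vee$. In the isogeny category one can therefore form the \emph{norm-endomorphism}
\[
N_B \;:=\; \iota \,\circ\, \phi_{\iota^* L}^{-1} \,\circ\, \iota^\vee \,\circ\, \phi_L \;\in\; \End_\QQ(A).
\]
A direct computation (using that the Rosati involution is $\rho(\psi) = \phi_L^{-1}\circ \psi^\vee \circ \phi_L$) shows $\rho(N_B) = N_B$, and a short diagram chase gives $N_B^2 = e\cdot N_B$ for some positive integer $e = e(B)$ (the exponent of $B$ with respect to $L$). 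Hence $\varepsilon_B := e^{-1} N_B$ is a symmetric idempotent in $\End_\QQ(A)$.

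Next I would construct the reverse map. Given $\varepsilon \in \End_\QQ^s(A)$, let $d \in \ZZ_{>0}$ be minimal with $d\varepsilon \in \End(A)$, and set $B := \im(d\varepsilon)$. The image of a homomorphism of abelian varieties is closed, connected and contains the origin, so $B$ is an abelian subvariety of $A$. Crucially, $B$ does not depend on $d$: replacing $d\varepsilon$ by any non-zero integer multiple $(nd)\varepsilon$ changes the homomorphism by an isogeny on the source and hence has the same image. So $B$ is a well-defined abelian subvariety associated to $\varepsilon$.

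It then remains to show that these two constructions are inverse to each other. Starting from an abelian subvariety $B$ with associated idempotent $\varepsilon_B = e^{-1} N_B$, one computes that the image of $N_B$ equals $\iota(B) = B$, so the abelian subvariety attached to $\varepsilon_B$ is $B$ again. In the other direction, given $\varepsilon$ with associated subvariety $B = \im(d\varepsilon)$, one must check that the norm-endomorphism $N_B$ of this $B$ is a positive integer multiple of $\varepsilon$; this is exactly the statement that the symmetric projector onto $B$ defined by the polarisation is the unique symmetric idempotent in $\End_\QQ(A)$ with image $B$. Uniqueness reduces, via the Rosati-positive involution on the semisimple algebra $\End_\QQ(A)$, to the fact that a symmetric idempotent in a semisimple $\QQ$-algebra with positive involution is determined by its image.

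The main obstacle I anticipate is not the construction of either map but verifying this uniqueness: concretely, showing that if $\varepsilon_1, \varepsilon_2 \in \End_\QQ^s(A)$ satisfy $\im(d_1\varepsilon_1) = \im(d_2\varepsilon_2)$ then $\varepsilon_1 = \varepsilon_2$. The clean way to handle this is to work inside $\End_\QQ(A)$ as a finite-dimensional semisimple $\QQ$-algebra, use that the Rosati involution is a positive involution (so trace pairings are definite), and observe that a symmetric idempotent is characterised by its kernel and image both being stable under $\rho$ and complementary; this forces $\varepsilon_1 - \varepsilon_2$ to be a nilpotent symmetric element, hence zero by positivity. Once this is in hand, the bijection is established and the explicit formula $B = \im(d\varepsilon)$ for the inverse direction is immediate from the construction.
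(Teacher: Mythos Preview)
The paper does not prove this statement at all: it is quoted as \cite[Theorem 5.3.2]{bl} and used as a black box in the construction of the map $F$. Your proposal is essentially the proof given in that reference---the norm-endomorphism $N_B = \iota \circ \phi_{\iota^*L}^{-1} \circ \iota^\vee \circ \phi_L$ is exactly Birkenhake--Lange's construction, and the two maps you describe are the ones they show to be mutually inverse---so there is nothing to correct; your uniqueness argument via ``symmetric nilpotent $\Rightarrow$ zero'' under the positive Rosati trace form is a valid repackaging of their orthogonal-complement argument.
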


By Theorem \ref{thm simple ab =s ymmetric idempotent}, we have 
\begin{equation}\label{eq A1}
    A_1 = \im(d_1\varepsilon_1)
\end{equation}
for some primitive $d_1\varepsilon_1 \in \End(\KS(X))$.
In fact, we can rewrite Equation (\ref{eq A1}) in terms of Clifford algebras only.
From the proof of Theorem \ref{thm decomposeKS}, it is clear that $\End_\QQ(\KS(X)) \simeq \Cl^+_\QQ(T)$.
Since $\TT$ in Equation~(\ref{eq TT}) is determined by the lattice $\Cl^+(T)$, the isomorphism restricts to an isomorphism of orders $\End(\KS(X)) \simeq \Cl^+(T)$.
In particular, if the real torus $\TT_1$ of $A_1$ is determined by the sublattice $\Lambda^{\real}_1$, $\ie$ if $\TT_1 \simeq (\Lambda^{\real}_1)_\RR/\Lambda^{\real}_1$, then Equation (\ref{eq A1}) becomes 
\begin{equation}\label{eq Lambda 1}
    \Lambda^{\real}_1 \simeq d_1\varepsilon_1 \cdot \Cl^+(T).
\end{equation}

We can explicitly compute an $\varepsilon_1 \in \Cl^+(T)$: let $\varphi$ be an isomorphism of $\QQ$-algebras as in Equation~(\ref{eq Q Clifford iso})
\begin{equation}\label{eq varphi}
    \varphi: \Cl^+(T_\QQ) \xrightarrow[]{~\simeq~} M_4(\mathbf{F}) \times M_4(\mathbf{F}).
\end{equation}
Let $E_{i,j} \in M_4(\mathbf{F})$ be the matrix
with $1$ at the $(i,j)$-th entry as the only non-zero entry. 
Then the elements in $M_4(\mathbf{F})\oplus M_4(\mathbf{F})$
\[(E_{1,1}, 0), \cdots, (E_{4,4}, 0), (0, E_{1,1}), \cdots, (0, E_{4,4})\]
are clearly symmetric idempotents of lowest possible rank, where here symmetric means stable under transpose of the matrix.
Considering Theorem \ref{thm poincare}, the element $(E_{1,1}, 0)$ acts on $A_1$. By choosing $\varepsilon_1$ to be the pull back of $\varepsilon_1$ via $\varphi$, we have the lattice $\Lambda^{\real}_1$ thus the real torus $\TT_1$ of $A_1$. 
Note that the complex structure~$J_1$ on $A_1$ is the restriction $J|_{\TT_1}$ of the complex structure $J$ of $\KS(X)$.
The polarisation $E$ of~$\KS(X)$ also restricts to a polarisation $E_i$ for $\TT_i$, and it has to be the unique one up to scalar multiples by Theorem~\ref{thm simple ab =s ymmetric idempotent}.
Similarly for each $i = 1, \cdots, 8$, we have the abelian $8$-fold  $A_i \sim (\TT_i, J_i, E_i)$, and $f$ as an isogeny of complex tori is given by $[p] \longmapsto \prnt{[d_1\varepsilon_1(p)], \cdots, [d_8\varepsilon_8(p)]}$.

It is clear that only the complex structure $J_1$, but not $\TT_1$ and $E_1$ of $A_1$, depends on the choice of the very general K3 surface $X \in \calK_P$ that we started with. 
Away from the very general members of $\calK_P$, the same choice of $\varphi$  still leads us to the same choice of the $\varepsilon_i$'s.
However, the resulting abelian~$8$-folds~$A_i$ are not very general, and they show exceptional behaviours. For example they may be no longer simple, or all of them belong to the same isogeny class.

This completes our proof for the following theorem.
\begin{theorem}\label{thm F}
    An isomorphism of algebras
    \[\varphi\colon \Cl^+(T_\QQ) \simeq M_4(\mathbf{F}) \oplus M_4(\mathbf{F})\]
    where $\mathbf{F}\otimes_\QQ \RR \simeq \HH$ induces a map $F$ from $\calK_P$ to a modular variety $\AMT$ of polarised abelian $8$-folds with totally definite quaternion multiplication (Diagram \ref{pic_decompKS}).
\end{theorem}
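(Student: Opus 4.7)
The plan is to realise the map $F$ by chasing a single minimal symmetric idempotent through the Kuga-Satake construction, essentially packaging the discussion preceding the theorem into a proof. I would carry this out in four steps.

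First, I would apply the Kuga-Satake construction fibre-wise over $\calK_P$. The real torus $\TT = \Cl^+(T_\RR)/\Cl^+(T)$ and the alternating form $E$ from Equation~(\ref{eq E tr}) depend only on the lattice $T$ and a fixed choice of $\alpha \in \Cl^+(T)$; hence they are constant across the family, while the complex structure $J$ varies with $X$. This already produces a family of polarised KS varieties of fixed polarisation type over $\calK_P$, all sitting inside the same ambient real torus.

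Second, for a very general $X \in \calK_P$, Theorem~\ref{thm decomposeKS} gives $\KS(X) \sim A_+^4 \times A_-^4$, and Theorem~\ref{thm simple ab =s ymmetric idempotent} realises the simple factor $A_+$ as the image of a primitive symmetric idempotent in $\End(\KS(X)) \simeq \Cl^+(T)$. Using the isomorphism $\varphi$, the minimal symmetric idempotents in $M_4(\HH_\QQ) \oplus M_4(\HH_\QQ)$ are exactly the eight elements $(E_{i,i},0)$ and $(0,E_{i,i})$. I would pick $\varepsilon_1 = \varphi^{-1}(E_{1,1},0)$, choose the smallest $d_1 > 0$ with $d_1 \varepsilon_1 \in \Cl^+(T)$, and define $A_1 = \im(d_1 \varepsilon_1)$, with lattice $\Lambda^{\real}_1 = d_1 \varepsilon_1 \cdot \Cl^+(T)$, complex structure $J|_{\TT_1}$, and polarisation $E|_{\TT_1}$.

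Third, I would verify that $A_1$ admits a totally definite quaternion multiplication of the required type. This reduces to showing that the corner subalgebra $\varepsilon_1 \Cl^+(T_\QQ) \varepsilon_1$ is isomorphic to $\HH_\QQ$ (it is the $(1,1)$-block of $M_4(\HH_\QQ)$ under $\varphi$) and that its left multiplication action preserves $\Lambda^{\real}_1$ up to a bounded denominator, giving the embedding $\HH_\QQ \hookrightarrow \End_\QQ(A_1)$ with representation $\Phi$ forced by $\varphi$. The Rosati condition has to be checked by comparing the transpose anti-involution on $\Cl^+(T_\QQ)$ (which, by Equation~(\ref{eq E tr}), induces the Rosati involution) with the standard positive anti-involution on $\HH_\QQ$. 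From $\Lambda^{\real}_1$ and the restricted polarisation one then reads off the attributes $(\calM, \calT)$, placing $F(X)$ in the appropriate $\AMT$. Finally, since the recipe $[X] \mapsto [A_1]$ depends only on the fixed data $(\Cl^+(T), E, \varepsilon_1)$ together with the varying $J_X$, it makes sense for every $X \in \calK_P$, not just the very general ones.

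The step I expect to be the main obstacle is the third one: ensuring that the quaternion embedding, the lattice $\Lambda^{\real}_1$, and the restricted polarisation assemble into exactly the PEL data defining a single $\AMT$ rather than a disjoint union of several, and that the Rosati involution matches $\rho$ on the nose. This is where the choice of $\varphi$ becomes essential and the Clifford-algebra identification is most delicate.
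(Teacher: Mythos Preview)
Your proposal is essentially the paper's own argument, which is the discussion immediately preceding the theorem statement. The obstacle you flag in step three is handled more lightly there: since $\TT_1$ and $E_1$ depend only on the fixed data $(\Cl^+(T),\varepsilon_1,\alpha)$ and not on $J$, the attributes $(\calM,\calT)$ are automatically constant across the family, and the Rosati compatibility is absorbed into Theorem~\ref{thm decomposeKS} rather than verified directly.
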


\begin{center}

\tikzset{every picture/.style={line width=0.75pt}} 

\begin{tikzpicture}[x=0.75pt,y=0.75pt,yscale=-1,xscale=1]

\draw    (171.4,56.4) -- (244.43,56.57) ;
\draw [shift={(246.43,56.57)}, rotate = 180.13] [color={rgb, 255:red, 0; green, 0; blue, 0 }  ][line width=0.75]    (10.93,-3.29) .. controls (6.95,-1.4) and (3.31,-0.3) .. (0,0) .. controls (3.31,0.3) and (6.95,1.4) .. (10.93,3.29)   ;
\draw [shift={(171.4,56.4)}, rotate = 180.13] [color={rgb, 255:red, 0; green, 0; blue, 0 }  ][line width=0.75]    (0,5.59) -- (0,-5.59)   ;
\draw    (319.4,56.4) -- (362.43,56.5) -- (392.43,56.57) ;
\draw [shift={(394.43,56.57)}, rotate = 180.13] [color={rgb, 255:red, 0; green, 0; blue, 0 }  ][line width=0.75]    (10.93,-3.29) .. controls (6.95,-1.4) and (3.31,-0.3) .. (0,0) .. controls (3.31,0.3) and (6.95,1.4) .. (10.93,3.29)   ;
\draw [shift={(319.4,56.4)}, rotate = 180.13] [color={rgb, 255:red, 0; green, 0; blue, 0 }  ][line width=0.75]    (0,5.59) -- (0,-5.59)   ;
\draw    (171.43,30.29) -- (244.43,30.29) ;
\draw [shift={(244.43,30.29)}, rotate = 180] [color={rgb, 255:red, 0; green, 0; blue, 0 }  ][line width=0.75]    (10.93,-3.29) .. controls (6.95,-1.4) and (3.31,-0.3) .. (0,0) .. controls (3.31,0.3) and (6.95,1.4) .. (10.93,3.29)   ;

\draw    (300.43,30.29) -- (370,30.29) ;
\draw [shift={(370,30.29)}, rotate = 180] [color={rgb, 255:red, 0; green, 0; blue, 0 }  ][line width=0.75]    (10.93,-3.29) .. controls (6.95,-1.4) and (3.31,-0.3) .. (0,0) .. controls (3.31,0.3) and (6.95,1.4) .. (10.93,3.29)   ;

\draw    (519,48) -- (567.93,90.97) ;
\draw [shift={(569.43,92.29)}, rotate = 221.29] [color={rgb, 255:red, 0; green, 0; blue, 0 }  ][line width=0.75]    (10.93,-3.29) .. controls (6.95,-1.4) and (3.31,-0.3) .. (0,0) .. controls (3.31,0.3) and (6.95,1.4) .. (10.93,3.29)   ;
\draw    (481,72) -- (529.93,114.97) ;
\draw [shift={(531.43,116.29)}, rotate = 221.29] [color={rgb, 255:red, 0; green, 0; blue, 0 }  ][line width=0.75]    (10.93,-3.29) .. controls (6.95,-1.4) and (3.31,-0.3) .. (0,0) .. controls (3.31,0.3) and (6.95,1.4) .. (10.93,3.29)   ;
\draw [shift={(481,72)}, rotate = 221.29] [color={rgb, 255:red, 0; green, 0; blue, 0 }  ][line width=0.75]    (0,5.59) -- (0,-5.59)   ;

\draw (123,19.4) node [anchor=north west][inner sep=0.75pt]    {$F: \calK_{P}$};
\draw (148,46) node [anchor=north west][inner sep=0.75pt]  [rotate=-271.02]  {$\in $};
\draw (270,46) node [anchor=north west][inner sep=0.75pt]  [rotate=-271.02]  {$\in $};
\draw (450,46) node [anchor=north west][inner sep=0.75pt]  [rotate=-271.02]  {$\in $};
\draw (550,110) node [anchor=north west][inner sep=0.75pt]  [rotate=-323]  {$\in $};
\draw (382,18.4) node [anchor=north west][inner sep=0.75pt]    {$\AMT \times \mathcal{A}_{\mathcal{M}_{2} ,\mathcal{T}_{2}} \times \cdots \times \mathcal{A}_{\mathcal{M}_{8} ,\mathcal{T}_{8}}$};
\draw (573.43,86.97) node [anchor=north west][inner sep=0.75pt]    {$\AMT$};
\draw (145,47.4) node [anchor=north west][inner sep=0.75pt]    {$X$};
\draw (410,47.4) node [anchor=north west][inner sep=0.75pt]    {$A_{1} \times A_{2} \cdots \times A_{8}$};
\draw (533.43,113.97) node [anchor=north west][inner sep=0.75pt]    {$A_{1}$};
\draw (557,56.4) node [anchor=north west][inner sep=0.75pt]    {$\pi _{1}$};
\draw (260,48.4) node [anchor=north west][inner sep=0.75pt]    {$\KS( X)$};
\draw (260,18.4) node [anchor=north west][inner sep=0.75pt]    {$\calA_{64}$};
\draw (354,62.4) node [anchor=north west][inner sep=0.75pt]    {$f$};
\draw (353,45.4) node [anchor=north west][inner sep=0.75pt]    {$\sim $};

\end{tikzpicture}
    \captionof{diagram}{The modular mapping F.}\label{pic_decompKS}
\end{center}

Next we will show that our modular mapping $F$ is indeed the one that fits in Diagram \ref{pic_descend}.
With reference to Diagram \ref{pic_spin rep},
there is a map 
\begin{equation}\label{eq tiF}
    \ti{F}: \calD^+_T \longrightarrow \calD_{\Cl^+(T)} \longrightarrow \DMT
\end{equation} 
that factors through the period domain $\calD_{\Cl^+(T)}$ of weight one Hodge structures on $\Cl^+(T)$.
Similar to Proposition \ref{prop DMT}(1), $\calD_{\Cl^+(T)}$ is the set of all representations $\ti{h}: \UU\rightarrow \Spin^+(T_\RR)$ such that  $J = \ti{h}(i) \in \Spin^+(T_\RR)\subset~\GL(\Cl^+(T_\RR))$ is a complex structure on $\Cl^+(T)$.

The first arrow in (\ref{eq tiF}), which is just the KS construction, corresponds \cite[Remark 4.2.1]{hu} to the lift of representations of $\UU$ with respect to the homomorphism $\Ad$ in Diagram \ref{pic_spin rep}.
    \begin{center}

\tikzset{every picture/.style={line width=0.75pt}} 

\begin{tikzpicture}[x=0.75pt,y=0.75pt,yscale=-1,xscale=1]

\draw    (268,96) -- (344.43,96.28) ;
\draw [shift={(346.43,96.29)}, rotate = 180.21] [color={rgb, 255:red, 0; green, 0; blue, 0 }  ][line width=0.75]    (10.93,-3.29) .. controls (6.95,-1.4) and (3.31,-0.3) .. (0,0) .. controls (3.31,0.3) and (6.95,1.4) .. (10.93,3.29)   ;
\draw    (264.43,81.29) -- (341.82,24.47) ;
\draw [shift={(343.43,23.29)}, rotate = 143.71] [color={rgb, 255:red, 0; green, 0; blue, 0 }  ][line width=0.75]    (10.93,-3.29) .. controls (6.95,-1.4) and (3.31,-0.3) .. (0,0) .. controls (3.31,0.3) and (6.95,1.4) .. (10.93,3.29)   ;
\draw    (381,32) -- (381.41,80.29) ;
\draw [shift={(381.43,82.29)}, rotate = 269.51] [color={rgb, 255:red, 0; green, 0; blue, 0 }  ][line width=0.75]    (10.93,-3.29) .. controls (6.95,-1.4) and (3.31,-0.3) .. (0,0) .. controls (3.31,0.3) and (6.95,1.4) .. (10.93,3.29)   ;

\draw (248,86.4) node [anchor=north west][inner sep=0.75pt]    {$\UU$};
\draw (349,88.4) node [anchor=north west][inner sep=0.75pt]    {$\SO^{+}(T_\RR)$};
\draw (347,11.4) node [anchor=north west][inner sep=0.75pt]    {$\Spin^+(T_\RR)$};
\draw (387,48.4) node [anchor=north west][inner sep=0.75pt]    {$\Ad$};
\draw (290,36.4) node [anchor=north west][inner sep=0.75pt]    {$\ti{h}$};
\draw (301,102.4) node [anchor=north west][inner sep=0.75pt]    {$h$};

\end{tikzpicture}
        \captionof{diagram}{Lift of representations.}\label{pic_lift}
    \end{center}
This lift of representations is not unique: suppose $h$ lifts to $\ti{h}$ and let $\ti{J} := \ti{h}(i) \in \Spin^+(T_\RR)$.
Then another representation $\ti{h'}: \UU \rightarrow \Spin^+(T_\RR)$ determined by $\ti{h'}(i) = -\ti{J}$ also descends to $h$ by $\Ad$.
However, only one of $\ti{J}$ and $-\ti{J}$ can meet the specific condition mentioned in Remark \ref{rmk E = polarisation} for the corresponding KS torus to be polarised.  
Therefore, there is a unique choice for the lift by further requiring $\ti{h}$ to be the complex structure of a polarised abelian variety with polarisation given by $\alpha$  (see Section \ref{subsect KS}), and the first arrow is injective.

The second arrow in (\ref{eq tiF}) is given by the composition of the half-spin representation $\varphi_+$:
it is clear that $\mathbf{F}^4$ where each copy of $M_4(\mathbf{F})$ acts on in (\ref{eq varphi}), when considered as a $\CC$-vector space, is the space of half-spinors.
We will abuse notation and denote the second arrow in (\ref{eq tiF}) by $\varphi_+$.

In fact, the map $\tiF := \varphi_+ \circ \Ad^{-1}$ is an holomorphic isometry mentioned in \cite[Section 3.6]{sa}, where it is called a representation of the type IV$_6$ HSD. This map $\tiF$ descends to a modular mapping $F$ naturally. To see this, consider $\Gamma$ an arithmetic subgroup of $\SO^+(2,6)$, we consider its inverse image in $\Spin^+(2,6)$ which is the extension of $\Gamma$ by $\ZZ/2\ZZ$, the kernel of the $\Ad$. 
Pushing forward this subgroup of $\Spin^+(2,6)$, we have a subgroup $\Gamma^*$ of $\SO^*(8)$.
This group $\Gamma^*$ is still an arithmetic subgroup of $\SO^*(8)$ as both $\Ad$ and $\phi^+$ are algebraic maps and thus the inverse image of $\Gamma$ is discrete.

It is easy to check that an orbit of $\SO^+(2,6)$ remains an orbit of $\SO^*(8)$ under $\tiF$:
Suppose we have an element $\gamma \in \Gamma$. Its preimage as a set is just $\{\pm \gamma\}$. The set is projected down to $\{\pm \gamma^*\}$ in $\SO^*(8)$. 
Let $[p_1] = [p_2] \in \calK_P$ with $\gamma \in \Gamma < \SO^+(2,6)$ such that $p_2 = \gamma p_1$.
Let $q_1 = \tiF(p_1)$ and $q_2 = \tiF(p_2)$. 
Then we know that either $q_2 = \gamma^* q_1$ or $q_2 = -\gamma^* q_1$. 
However, both options give $[q_1] = [q_2]$ in $\Gamma^*\backslash\DMT$, so $\tiF$ descends to the required map $F$.

\begin{remark}
    If assumption (\ref{assumption2}) is not satisfied, $\ie$ the determinant $d$ of the quadratic form on~$T_\QQ$ is not a square in $\QQ$, then \cite[Theorem 7.7]{vg} \cite[Section V.2]{la} we have $\Cl^+_\QQ(T_\QQ)\simeq M_4(\mathbf{F})$ where $\mathbf{F}$ is a quaternion algbra over $\QQ(\sqrt{d})$ instead. 
    By the same reasons as Theorem~\ref{thm decomposeKS}, we have $\KS(T) \sim A^4$ for a generic KS variety $\KS(T)$, where now $A$ is a simple abelian $16$-fold with $\End(A) \supseteq \mathbf{F}$.
    However, this does not descend from the holomorphic isometry $\tiF:= \varphi_+ \circ \Ad^{-1}$ studied by \cite{sa}.
\end{remark}

\begin{proposition}
    The modular mapping $F: \calK_P \rightarrow \AMT$ is a local diffeomorphism.
\end{proposition}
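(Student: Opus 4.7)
The plan is to exploit everything that was set up just before the proposition: $\tiF$ is an equivariant isometry between the HSDs $\calD_T^+$ and $\DMT$, and both have the same dimension, so it is already a diffeomorphism upstairs, and all that remains is to descend.

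First I would record the dimension count. By Remark \ref{rmk D_T}(1), $\calD_T^+$ has complex dimension $20-r = 6$ for $r = 14$; by Remark \ref{rmk DMT}, $\DMT$ has complex dimension $m(m-1)/2 = 6$ for $m = 4$. Since $\tiF$ is a holomorphic embedding between connected complex manifolds of equal dimension, its image is open; since $\tiF$ is an isometry between Hermitian symmetric spaces, which are complete as Riemannian manifolds, $\tiF$ is proper and its image is closed. By connectedness of $\DMT$, the image is all of $\DMT$, so $\tiF$ is a bijective holomorphic isometry, and its inverse $\tiF^{-1}$ is also a smooth isometry. In particular $\tiF$ is a diffeomorphism.

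Second, I would invoke the equivariance of $\tiF$ recorded in Remark \ref{rmk tiF equivariant}: under the identifications of the two characterisations of the HSDs as homomorphism sets, $\tiF$ intertwines the conjugation action of $\SO^+(T_\RR)$ on $\calD_T^+$ with the conjugation action of $\SO^*(W_+)$ on $\DMT$. This equivariance is what produces the pushforward $\tiF_*\colon \Gamma(P) \hookrightarrow \SO^*(W_+)$, and by construction (see Diagram \ref{pic_descend}) the arithmetic data $(\calM, \calT)$ are chosen precisely so that $\Gamma(\calM,\calT) = \tiF_*(\Gamma(P))$. Hence $\tiF$ descends to a well-defined bijection $F\colon \Gamma(P) \backslash \calD_T^+ \to \tiF_*(\Gamma(P)) \backslash \DMT$, and one checks that this bijection agrees with the explicit construction in Theorem \ref{thm F}, since both are induced by the composition $\varphi_+ \circ \Ad^{-1}$ of \eqref{eq tiF}.

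Finally, I would pass smoothness through the quotients. Both arithmetic groups act properly discontinuously with finite stabilisers on their respective HSDs, so the quotient maps are local orbifold charts (and honest covering maps away from the branch locus). The smooth equivariant maps $\tiF$ and $\tiF^{-1}$ therefore induce smooth maps on the (orbifold) quotients, which are mutually inverse; this makes $F$ an (orbifold) diffeomorphism.

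The main obstacle is essentially bookkeeping, not substance: one has to check that the descent of $\tiF$ through \eqref{eq tiF} really coincides with the map $F$ built in Section \ref{subsect explicit F} via the isomorphism $\varphi$, and that the choice of $(\calM, \calT)$ identifies $\AMT$ with the target quotient $\tiF_*(\Gamma(P)) \backslash \DMT$ rather than with a commensurable but distinct arithmetic quotient. Once this identification is in place, the diffeomorphism property is a direct consequence of $\tiF$ being an equivariant isometry between equidimensional HSDs.
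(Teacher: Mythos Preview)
Your argument is correct and reaches the same conclusion, but by a different route than the paper. The paper invokes the Inverse Function Theorem to reduce to showing that $\tiF$ is bijective, and then proves bijectivity by a representation-theoretic analysis: it factors $\tiF$ through the intermediate domain $(\calD_T^+)^{\KS}_+$, identifies this with $\Spin^+(2,6)/(\Spin(2)\times\Spin(6))$ via the maximal compact subgroup $K_{\Spin}=\Ad^{-1}(\SO(2)\times\SO(6))$, and then shows that the induced map $(\varphi_+)_*$ onto $\SO^*(8)/\U(4)$ is surjective with the potential $\ZZ_2$ kernel of $\varphi_+$ killed by the polarisation condition of Remark~\ref{rmk E = polarisation}. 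Your approach instead uses the global differential geometry of the two HSDs: equal dimension plus holomorphic embedding gives an open image, local isometry from a complete source gives a covering map onto the connected target, and injectivity of the embedding then forces bijectivity. What the paper's argument buys is that it makes the two $\ZZ_2$ ambiguities (one in $\Ad$, one in $\varphi_+$) and their resolution by the polarisation explicitly visible, tying the bijectivity of $\tiF$ back to the Kuga--Satake picture; what your argument buys is independence from the spin-group combinatorics and a cleaner treatment of the descent, where you are right to flag that one must check $\Gamma(\calM,\calT)=\tiF_*(\Gamma(P))$ so that the target quotient really is $\AMT$ rather than a merely commensurable space --- the paper is terse on this point.
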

\begin{proof}
    By the Inverse Function Theorem, it is enough to show that $\tiF$ is bijective. 
    Denote by $(\calD_T^+)^{\KS}_+$ the subset of $(\calD_T^+)^{\KS}$ whose members are complex structures of a KS variety $\KS(T,\alpha)$.
    Then it suffices to prove that $(\varphi_+)_*$ in
    \begin{equation}
        \ti{F}\colon \calD^+_T \xrightarrow[]{~~\simeq~~} \prnt{\calD^+_T}^{\KS}_+ \xrightarrow[]{~~(\varphi_+)_*~~} \DMT
    \end{equation}
    is a bijection.
    Let
    \[K_\Spin := \Ad^{-1}\prnt{\SO(2)\times \SO(6)} \simeq \prnt{\Spin(2) \times \Spin(6)}/\chave{\pm (1,1)},\]
   which is a connected maximal compact subgroup of $\Spin^+(2,6)$.
   The left multiplication action of~$\Spin^+(2,6)$ on $(\calD_T^+)^{\KS}$ is transitive, so $K_\Spin$ is the stabiliser subgroup for the action, and
   \[\prnt{\calD^+_T}^{\KS}_+ \simeq \prnt{\Spin^+(2,6)/\chave{\pm 1}}/\Srm_\Spin \simeq \Spin^+(2,6)/\prnt{\Spin(2) \times \Spin(6)}.\]
    Note that as in Propositions~\ref{prop D_T} and~\ref{prop DMT}, the conjugation and left multiplication actions of $\Cl^+(2,6)$ on the set $(\calD_T^+)^{\KS}$ of Hodge structures in $\calD_T^+$ as described in Diagram \ref{pic_lift}, are equivalent. 
    Therefore the group $K_\Spin$ is also the stabiliser subgroup of the conjugation action of $\Spin^+(2,6)$, and so  \[(\varphi_+)_*\colon \Spin^+(2,6)/\Srm_\Spin \longrightarrow \SO^*(8)/\U(4)\] is surjective with kernel $\chave{1,\lambda} =: \ker(\varphi_+)$. 
   By considering Remark \ref{rmk E = polarisation} again, $(\varphi_+)_*$ is also injective.
\end{proof}

\begin{remark}
    Since $\tiF$ is holomorphic \cite{sa}, the modular mapping $F$ is also holomorphic, and therefore \cite{kk} $F$ extends to the Satake-Baily-Borel compactifications of $\calK_P$ and $\AMT$.
\end{remark}

\section{Computation of an example}\label{sect MAGMA}

In this section, we focus on the special family $\calK_P$ of K3 surfaces studied in \cite{cm2} where $P$ is an even, indefinite, 2-elementary polarising lattice
$P = U \oplus D_8(-1) \oplus D_4(-1)$.
By \cite[Proposition 1.14.4, Theorem 3.6.2]{n}, the lattice $P$ satisfies the assumption (\ref{assumption}), and
its transcendental lattice can be computed to be $T:= U \oplus U(2) \oplus D_4(-1)$.
The lattice also satisfies assumption (\ref{assumption2}) as its associated quadratic form has determinant $2^4$.
We will give an explicit construction of the map $F:\calK_P \rightarrow \AMT$ sending a K3 surface~$X$ to an abelian~$8$-fold~$A_1 = (\TT_1, J_1, E_1)$ as in Diagram \ref{pic_decompKS}.
All computations in this section were done using {\tt MAGMA}: details can be found in the author's PhD thesis~\cite{p}.

\subsection{Simple decomposition of KS variety}\label{subsect ex}

First, we fix the isomorphism $\varphi$ in Equation (\ref{eq varphi}). 
By Lemma \ref{lem glueCl}, it is enough to fix similar isomorphisms of $\QQ$-algebras for the indecomposible sublattices $U$, $U(2)$ and $D_4(-1)$ of $T$.
We will abuse notation and also call these isomorphisms $\varphi$.
We will apply the Fundamental Lemma for Clifford algebras.

First let us consider the lattice $U(n)$ for $n = 1,2$.
Let $\chave{f_1,f_2}$ be generators of the lattice $U(n)$ such that the associated symmetric bilinear form is given by the matrix 
\[M_{U(n)}:=
\begin{pmatrix}
0 & n \\
n & 0 \\
\end{pmatrix}.
\]
Then the linear map $\varphi$ determined by
\[
    \varphi(1) =  \Eins,~~
    \varphi(f_1) = \begin{pmatrix} 0 & 1 \\ 0 & 0 \\ \end{pmatrix},~~
    \varphi(f_2) = \begin{pmatrix} 0 & 0 \\ 2n & 0 \\ \end{pmatrix}.
\]
preserves the Clifford multiplication, thus extends to a $\ZZ$-algebra homomorphism ${\varphi:\Cl(U(n)) \rightarrow M_2(\QQ)}$.

Similarly, let $\chave{h_1, h_2, h_3, h_4}$ be the generators of the lattice $D_4(-1)$ such that the associated symmetric bilinear form is $-2M_\fro$, where
 \[M_\fro := \frac{1}{2}\begin{pmatrix}
        2&1&1&1\\
        1&2&0&0\\
        1&0&2&0\\
        1&0&0&2
    \end{pmatrix}.\]
Notice that the matrix $M_\fro$ is also the matrix of the symmetric bilinear form of the {\em Hurwitz integers} $\fro$ \cite[Section 5.1]{cs} defined as
\[ \fro := \ZZ\left\langle h:= \frac{1+i+j+k}{2}, i, j, k\right\rangle < \HH_\QQ := \QQ\langle i,j,k\rangle.\]
The Hurwitz integers form a maximal order in $\HH_\QQ = (-1, -1)_\QQ$
with a quadratic form $q$ given by the norm function $z \mapsto z \Bar{z}$.
Moreover \cite[Theorem 8.7]{rein}, $M_n(\fro)$ is a maximal order in $M_n(\HH_\QQ)$ for any integer~$n>0$.
This suggests that we can define a linear map $\varphi: D_4(-1) \rightarrow M_2(\fro)$ by setting
\[h_1 \mapsto \begin{pmatrix} 0 & h \\ -2\bar{h} & 0 \\ \end{pmatrix}, \quad h_2 \mapsto \begin{pmatrix} 0 & i \\ -2\bar{i} & 0 \\ \end{pmatrix}, \quad h_3 \mapsto \begin{pmatrix} 0 & j \\ -2\bar{j} & 0 \\ \end{pmatrix}, \quad h_4 \mapsto \begin{pmatrix} 0 & k \\ -2\bar{k} & 0 \\ \end{pmatrix}.\]
One can check that $\varphi$ satisfies $\varphi(z)^2 = q(z) \cdot \Eins$. Therefore, $\varphi$ extends uniquely to an algebra homomorphism $\varphi: \Cl(D_4(-1)) \rightarrow~M_2(\fro)$.

Lemma \ref{lem glueCl} then say that any two of the homomorphisms of graded algebras can be glued together by making use of their respective graded tensor products.
This gives us a homomorphism (which we still call $\varphi$) from the lattice $\Cl(T)$ to $M_8(\HH_\QQ)$, where $\varphi(x)$ for any $x \in \Cl^+(T)$ is in the form
\[\varphi(x) = \begin{pmatrix}
    m_{11} & 0 & 0 & m_{14} & 0 & m_{16} & m_{17} & 0\\
    0 & m_{22} & m_{23} & 0 & m_{25} & 0 & 0 & m_{28}\\
    0 & m_{32} & m_{33} & 0 & m_{35} & 0 & 0 & m_{38}\\
    m_{41} & 0 & 0 & m_{44} & 0 & m_{46} & m_{47} & 0\\
    0 & m_{52} & m_{53} & 0 & m_{55} & 0 & 0 & m_{58}\\
    m_{61} & 0 & 0 & m_{64} & 0 & m_{66} & m_{67} & 0\\
    m_{71} & 0 & 0 & m_{74} & 0 & m_{76} & m_{77} & 0\\
    0 & m_{82} & m_{83} & 0 & m_{85} & 0 & 0 & m_{88}
\end{pmatrix}  \in M_8(\HH_\QQ).\]
By extracting the two obvious $4$-by-$4$ blocks, $\varphi(\Cl^+(T))$ can be identified to a subset in $M_4(\HH_\QQ)~\oplus~M_4(\HH_\QQ)$:
\[\varphi(x) = \prnt{
\begin{pmatrix}
    m_{11} & m_{14} & m_{16} & m_{17}\\
    m_{41} & m_{44} & m_{46} & m_{47}\\
    m_{61} & m_{64} & m_{66} & m_{67}\\
    m_{71} & m_{74} & m_{76} & m_{77}
\end{pmatrix},
\begin{pmatrix}
    m_{22} & m_{23} & m_{25} & m_{28}\\
    m_{32} & m_{33} & m_{35} & m_{38}\\
    m_{52} & m_{53} & m_{55} & m_{58}\\
    m_{82} & m_{83} & m_{85} & m_{88}
\end{pmatrix}}.
\]
Extending linearly by $\QQ$, this gives us the required isomorphism $\varphi: \Cl^+(T_\QQ)\rightarrow M_4(\HH_\QQ)\oplus M_4(\HH_\QQ)$.

Let $\{f_1, f_2\}, \{f_3, f_4\}$ and $\{h_1, h_2, h_3, h_4\}$ be the sets of generators of  $U$, $U(2)$ and $D_4(-1)$ such that the matrices associated to the symmetric bilinear forms with respect to those generators are~$M_{U(1)}, M_{U(2)}$ and $-2M_{\fro}$ respectively. 
Let $\Eins$ be the identity element of any Clifford algebra.
Then $8E_{1,1},\cdots, 8E_{4,4}$ pull back via $\varphi: \Cl(U\oplus U(2)) \rightarrow M_4(\QQ)$ 
to
\[x_1 := f_3f_1f_2f_4,~~~~x_2 := 4f_1f_2 - x_1,~~~~x_3 := 2f_3f_4 - x_1,~~~~x_4 := 8\cdot\Eins - x_1 - x_2 - x_3,\]
which are pseudo-idempotents, $\ie$ primitive elements of $\Cl(U\oplus U(2))$ that are integral multiples of idempotents in $\Cl((U\oplus U(2))_\QQ)$.
Similarly, pulling back the diagonal matrices $\diag(4,0)$ and $\diag(0,4)$ via $\varphi\colon \Cl(D_4(-1)) \rightarrow M_2(\fro)$ give pseudo-idempotents
\[y_1 := 2-H,~~~~
    y_2 := 2+H.\]
By Lemma \ref{lem glueCl}, we then have the following eight pseudo-idempotents 
    \[\chave{32\varepsilon_1, \cdots, 32\varepsilon_8} = \chave{x_1\otimes y_1,~ x_2\otimes y_2,~ x_3\otimes y_2,~ x_4\otimes y_1,~ x_1\otimes y_2,~ x_2\otimes y_1,~ x_3\otimes y_1,~ x_4\otimes y_2}\]
in $\Cl(T)$ whose respective images under $\varphi$ are
    \[\chave{32E_{1,1}, 0), \cdots, (32E_{4,4}, 0), (0, 32E_{1,1}), \cdots, (0, 32E_{4,4})}.\]
    
     Since the sub-sublattices $U\oplus U(2)$ and $D_4(-1)$ are orthogonal to each other, the actions of the $x_j$'s commute with that of the $y_k$'s.
    Therefore, for the pseudo-idempotent $32\varepsilon_i = x_j\otimes y_k$, the lattice $\Lambda^{\real}_i$ is
    \[\Cl^+(T) \cdot 32\varepsilon_i = \chave{L\cdot K \in \Cl^+(T) : L\in \prnt{\Cl(U\oplus U(2))\cdot x_j}, ~K \in \prnt{\Cl(D_4(-1))\cdot y_k}}.\]
    Note that $\Cl(U\oplus U(-2))\cdot x_j = \ker(8\cdot\Eins - x_j)\subset \Cl(U\oplus U(-2))$.
    By applying the {\tt MAGMA} built-in function KernelMatrix, one can obtain the four primitive generators $L_1, \cdots, L_4\in \Cl(U\oplus U(2))\cdot x_j$, where two of them are in $\Cl^+(U\oplus U(2))$ and the other two are in $\Cl^-(U\oplus U(2))$.
    Similarly, one can obtain eight generators $K_1, \cdots, K_8$ for the lattice $\Cl(D_4(-1)) \cdot y_k$ where four of them are in $\Cl^+(D_4(-1))$, and the other four are in $\Cl^-(D_4(-1))$. 
    There are exactly $16$ pairs of $(s,w)$ such that the product $L_sK_w$ lies in $\Cl^+(T)$.
    These $16$ vectors form the $16$ generators of the lattice $\Lambda^{\real}_i\subset \RR^{16}$.
    Lastly, let us choose $\alpha := (f_1+f_2)\cdot (f_1 - f_2)$.
    As explained in Section \ref{sect F}, these are all that are needed to obtain the abelian $8$-fold $A_1$ when $i=1$ from a K3 surface $X\in \calK_P$.

    \subsection{A representation of the endomorphism algebra}\label{subsect rep End}
Ultimately, we would like to obtain the attributes $\chave{x_1, \cdots, x_4}, \calM, \calT$ associated to all $A_1$ obtained from the K3 surfaces in $\calK_P$ by the above means. 
    The key to solving Equations (\ref{eq_xi}), (\ref{eq_calM}) and (\ref{eq_ET}) is to obtain the representation $\Phi$ of the order $R := \End(A_1)<\mathbf{F} := \End_\QQ(A_1)\simeq \HH_\QQ$.
    We will compute a real representation $\Phi^{\real}$ out of $J_1$ and $\Lambda^{\real}_1$ with respect to the current basis of $\RR^{16}$ before transforming it to the  representation used in \cite{sh}.

     Before we begin the computations, let us recall the following facts about
     the endomorphism algebra~$\mathbf{F}$ of $A_1$.
     \begin{enumerate}
         \item The action of any element $f \in \mathbf{F}$ on $\CC^8 \simeq (\Lambda_1)_\RR$ is on the right as it has to commute with the left action of the complex structure~$J_1$. 
  However, under the representation $\Phi$  which identifies~$f \in \mathbf{F}$ to an element in $M_g(\CC)$, the matrix $\Phi(f) \in M_g(\CC) = \End(\CC^8)$ has the usual action on $\CC^8$ by left multiplication.

  \item Let $\varphi$ be as in Section \ref{subsect ex}. 
  The action of $\varphi(f)$ for any $f\in \mathbf{F}$  must commute with the natural action of $\varphi(\Cl^+(T))$ on the $\Cl^+(T)$-module $\Lambda^{\real}<(\HH_\QQ)^4$. 
  Therefore $\varphi(f)$ is a diagonal matrix.
     \end{enumerate}

 It is enough to define $\Phi$ on a set of four generators $\chave{r_1,\cdots, r_4}\subset R$, which can also be identified to a set $\{\ti{h}_1, \cdots, \ti{h}_4\} \subset \Cl^+(T)$ by the proof of Theorem \ref{thm decomposeKS}.
 Let us identify $h_i \in D_4(-1)$ with~$(1,1,h_i) \in~T \subset \Cl(T)$.
Define $c := 2h_1-h_2-h_3-h_4\in\Cl(T)$. 
Let
\[\ti{h}_1 := \Eins,~ 
     \ti{h}_2 := c \cdot h_1,~
     \ti{h}_3 :=c \cdot h_2,~
     \ti{h}_4 :=c \cdot h_3.\]
With {\tt MAGMA}, one can easily check that $\ti{h}_1, \cdots, \ti{h}_4$ together span a primitive lattice of rank $4$ in $\Cl^+(T)$, and that their images under $\varphi$ are diagonal matrices.
Let $N_i\in M_{16}(\ZZ)$ be matrices (with left multiplication on $\RR^{16} \simeq (\Lambda^{\real}_1)_\RR$) that correspond to the right actions of the elements $\ti{h}_i$ on $\RR^{16}$ with respect to the $16$ generators of $\Lambda^{\real}_1$ obtained in Section \ref{subsect ex}.
One can also check that the $N_i$'s span a primitive lattice in $M_{16}(\ZZ)$, and so we have the real representation $\Phi^{\real}$ of $F$ determined by $\ti{h}_i\mapsto N_i$.
In fact, each matrix in the image of $\Phi^{\real}$ contains many zero entries, which makes our computer computations 
very practical and efficient. 
\begin{lemma}\label{order of e_i}
    The real representation $\Phi^{\real}$  has image in the set of block diagonal matrices
    \[\chave{\diag(\calN_1, \cdots, \calN_4): \calN_j \in M_4(\ZZ)}\]
    with respect to a suitable order of the generators of the lattice $\Lambda^{\real}_1$ defining $A_1$.
\end{lemma}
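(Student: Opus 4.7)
The plan is to promote the scalar-diagonal form of $\varphi(f)$ established in the discussion preceding the statement to a $\ZZ$-module statement by choosing a $\ZZ$-basis of $\Lambda^{\real}_1$ that respects its natural right $\fro$-module structure.

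First, I would unwind the right $F \simeq \HH_\QQ$-action on $\Lambda^{\real}_1 < (\HH_\QQ)^4$ via $\varphi$. Since $\Lambda^{\real}_1 = \Cl^+(T) \cdot 32\varepsilon_1$ is carried by $\varphi$ into (a sublattice of) the single non-zero column of $\varphi(\Cl^+(T)) \cdot (32 E_{1,1}, 0)$, the action of $f \in F$ (for which $\varphi(f) = (\alpha E_{1,1}, 0)$, obtained from $\varphi(f) \in \varphi(\varepsilon_1 \Cl^+(T_\QQ) \varepsilon_1) = \HH_\QQ \cdot (E_{1,1}, 0)$) on this column by right multiplication amounts to the entry-wise right scalar multiplication $(a_1, a_2, a_3, a_4)^t \cdot \alpha = (a_1 \alpha, a_2 \alpha, a_3 \alpha, a_4 \alpha)^t$. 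This manifestly preserves the direct-sum decomposition
\[(\HH_\QQ)^4 = \HH_\QQ \oplus \HH_\QQ \oplus \HH_\QQ \oplus \HH_\QQ,\]
so $\Lambda^{\real}_1$ inherits the structure of a right $\fro$-module of $\QQ$-rank $4$.

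Second, since $\fro$ (the Hurwitz integers, Example \ref{ex o}) is a maximal order in $\HH_\QQ$ with principal right ideals (being right-Euclidean), every finitely generated torsion-free right $\fro$-module is free. Hence $\Lambda^{\real}_1 = v_1 \fro \oplus v_2 \fro \oplus v_3 \fro \oplus v_4 \fro$ for suitable $v_k \in \Lambda^{\real}_1$. Fixing any $\ZZ$-basis $\{e_1, e_2, e_3, e_4\}$ of $\fro$ (for example $\{h, i, j, k\}$), the sixteen products $\{v_k e_l\}$, listed as
\[(v_1 e_1, v_1 e_2, v_1 e_3, v_1 e_4, v_2 e_1, \ldots, v_4 e_4),\]
form a $\ZZ$-basis of $\Lambda^{\real}_1$ compatible with the direct-sum decomposition $\bigoplus_k v_k \fro$. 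In this basis, each $\Phi^{\real}(\ti{h}_i)$ is block diagonal with four $4 \times 4$ blocks $\calN_j \in M_4(\ZZ)$, each equal to the matrix of right multiplication by $\ti{h}_i$ on $\fro$ expressed in $\{e_1, \ldots, e_4\}$.

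The main obstacle will be the concrete extraction of a free right $\fro$-basis $\{v_1, \ldots, v_4\}$ from the sixteen $\ZZ$-generators $\{L_s K_w\}$ of $\Lambda^{\real}_1$ constructed in Section \ref{subsect ex}. This reduces to an $\fro$-Hermite-normal-form computation on the $\fro$-module generated by the $L_s K_w$, which can be performed directly in MAGMA. Once such a basis is obtained and the sixteen generators $\{v_k e_l\}$ are ordered as above, the suitable order required by the lemma is in place and the block-diagonal form of $\Phi^{\real}$ can be read off.
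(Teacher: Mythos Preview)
Your argument rests on two assumptions that do not hold in this setting. First, you identify the endomorphism order $R=\End(A_1)$ with the Hurwitz integers $\fro$, but this is not the case: the generators $\ti h_1,\ldots,\ti h_4$ were chosen so that their images under $\varphi$ are diagonal, and the relevant diagonal entry (the one acting on the first column) runs over $1,-2\bar h,2i,2j$, which generates the strictly smaller order $R=\langle 1,\fro(-2)\rangle$, not $\fro$ (this is recorded explicitly later in Section~\ref{subsect attributes}). Second, even granting an $R$-module structure, $\Lambda^{\real}_1$ is \emph{not} free over $R$: the block decomposition computed in Section~\ref{subsect attributes} gives $\Lambda^{\real}_1\simeq I_6\oplus I_6\oplus I_{12}\oplus I_{12}$ with $I_6\not\simeq I_{12}$ as $R$-modules (they have different numbers of minimal vectors, Lemma~\ref{lem iso R-mods}), so no free basis $v_1,\ldots,v_4$ exists and your second step collapses. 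Relatedly, $\Lambda^{\real}_1$ is not even stable under right multiplication by all of $\fro$ (otherwise $I_6$ would be a fractional $\fro$-ideal, hence principal, hence with the same minimal-vector count as $\fro=I_{12}$), so the Hurwitz-order argument cannot be salvaged by enlarging the acting ring.

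There is also a mismatch with what the lemma actually asserts: it asks for a \emph{reordering} of the sixteen specific generators $L_sK_w$ produced in Section~\ref{subsect ex}, not for a new $\ZZ$-basis obtained by a Hermite-normal-form computation. The paper's proof exploits precisely this tensor structure: for each fixed $s_0$, the four vectors $\{L_{s_0}K_w:w=1,\ldots,4\}$ span the rank-$4$ sublattice sitting in a single entry of the first column of $\varphi(\Cl^+(T))$, and right multiplication by the diagonal matrices $\varphi(\ti h_i)$ visibly preserves each such entry. Grouping the sixteen generators by the index $s$ therefore gives the block-diagonal form directly, with no appeal to freeness.
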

\begin{proof}
    
    Recall in Section \ref{subsect ex}, each generator of the lattice $\Lambda^{\real}_1$ is a product of $L_s \in \Cl(U\oplus U(n))$ and~$K_w\in\Cl(D_4(-1))$. 
    In particular when fixing $s = s_0$, the set $\{L_{s_0}K_w: w = 1, \cdots, 4\}< \Lambda^{\real}$ generates a lattice of rank $4$ which corresponds to one of the four entries in the first column of $\varphi(\Cl^+(T))$.
    Since the action of the diagonal matrices $\varphi(\langle \ti{h}_1, \cdots, \ti{h}_4 \rangle)$ on the first column of $\varphi(\Cl^+(T))$ is equivalent to that of $\Phi^{\real}(\HH_\QQ)$ on $\RR^{16} \simeq \Lambda^{\real}_1 = \langle L_sK_w: s, w = 1, \cdots,4\rangle$, it is clear that with a suitable order of the generators $L_sK_w$, the image $\Phi^{\real}(\HH_\QQ)$ lies in the claimed set of block diagonal matrices.
 \end{proof}

With the isomorphism $\mu$ described in (\ref{eq mu}), the matrices $N_i$ can be identified with some complex matrices~$M_i\in M_8(\CC)$ such that they respectively represent the action of $\ti{h}_i$ on $\CC^8 \simeq (\Lambda_1)_\RR$ with respect to the eight~$+i$-eigenvectors of the complex structure $J_1$.
Then $\ti{h}_i\mapsto M_i$ determines a complex representation of~$\mathbf{F}$.
To be specific,
     one can choose an appropriate set of primitive generators $\chave{r_i}$ of $R$ satisfying the same multiplication rules as $\{\varphi(\ti{h}_i)\}$.
    Since the action of $\ti{h}_i\ti{h}_j$ from the right corresponds to the action of $M_jM_i$ from the left, the required algebra isomorphism $\Phi\colon F \rightarrow M_8(\CC)$ is then given by \[\iota(r_i) \longmapsto M_i,\] where $\iota$ is the anti-isomorphism of $\mathbf{F}$:
    \begin{align*}
    \iota\colon \HH_\QQ &\longrightarrow \HH_\QQ^{op}\\
    1, i, j &\longmapsto 1, i, j \text{ respectively,}\\
    k &\longmapsto -k.
\end{align*}

In our example, we have  \[\varphi(\ti{h}_1) = \Eins_2,~~
    \varphi(\ti{h}_2) = 
    \begin{pmatrix}
        -2\Bar{h}& 0\\
        0 & -2h
    \end{pmatrix},~~ 
    \varphi(\ti{h}_3) = 
    \begin{pmatrix}
        2i& 0\\
        0 & -2i
    \end{pmatrix},~~ 
    \varphi(\ti{h}_4) =
    \begin{pmatrix}
        2j& 0\\
        0 & -2j
    \end{pmatrix}.\]
    It suggests that the required complex representation $\Phi: \mathbf{F} \rightarrow M_8(\CC)$ can be defined by 
     \[1 \mapsto M_1 = \Eins_8, \quad (-1 + i + j - k) \mapsto  M_2, \quad 2i \mapsto M_3, \quad2j \mapsto M_4.\]

    The representation  can be transformed to the standard one $\Phi$ by a change of basis of $\CC^8$, $\ie$
there exists an~$8$-by-$8$ change of basis matrix $Q\in M_8(\CC)$, such that 
\[Q \cdot \Phi(r_i)= (\rchi(r_i) \otimes \Eins_4) \cdot Q \text{ for all }i = 1, \cdots, 4,\]
where $\rchi$ is the representation of $\mathbf{F}$ introduced in Section~\ref{subsubsect moduli ab}.
 To be specific, using the $\CC$-vector space isomorphism $(\cdot)^\sim: M_d(\CC) \rightarrow \CC^{d^2}$ described in \cite[p.252]{bl} which identifies a $d$-by-$d$ matrix $\{a_{ij}\}$ to a horizontal vector $a\ti{~} := (a_{11}, a_{12}, \cdots, a_{dd})$,  
one can compute matrices $A$ and $B$ such that for all $8$-by-$8$ matrix $M$ such that
\begin{align*}
    (M \cdot \Phi(r_i)){\ti{~}}&= M\ti{~} \cdot A, \\
    ((\rchi(r_i)\otimes \Eins_4) \cdot M){\ti{~}} &= M\ti{~} \cdot B.
\end{align*}
Then $Q$ is a (non-unique) $8$-by-$8$ non-singular matrix in the kernel space of $A-B$.

\subsection{Attributes}\label{subsect attributes}

In this section, we will compute the attributes $\chave{x_1, \cdots, x_4}, \calM, \calT$ associated to the image $A_1$ of $X\in \calK_P$.

We will start with $\chave{x_1, \cdots, x_4}$: first choose $\{(x^{\real})_1, \cdots, (x^{\real})_4\} \subset M_{16}(\ZZ)$ that satisfies the real version of Equation (\ref{eq_xi})
\begin{equation}\label{eq_x^re}
    (\Lambda^{\real}_1)_\QQ = \sum_{i=1}^4 \Phi^{\real}(\mathbf{F})(x^{\real})_i.
\end{equation}
We fix the order of the set of generators $\Lambda^{\real}_1$ such that the image $\Phi^{\real}(\HH_\QQ)$ are block diagonal matrices of $4$-by-$4$ blocks.
Then it is clear that the attributes $\{(x^{\real})_1, \cdots, (x^{\real})_4\}$ can be chosen to be $\{e_1, e_5, e_9, e_{13}\}$, where $e_j= (0, \cdots, 0, 1, 0, \cdots, 0)$ is the vector with $1$ as its $j^\text{th}$ entry.
The complex vectors $x_i$ that distinguish the members in $\AMT$ can then be obtained by multiplying the change of basis $Q$ obtained at the end of Section \ref{subsect rep End} to their images in the $+i$-eigenspace of the complex structure $J_1$ of $A_1$.
It can be checked that they do satisfy the original equation (\ref{eq_xi}), and it is clear that they are determined by $J_1$ as we vary $X \in \calK_P$.  

Next, we will compute the attributes $\calM$ of $\AMT$ from a real version of Equation (\ref{eq_calM}): 
\[\Lambda^{\real}_1 = \chave{\sum_{i=1}^4 \Phi^{\real}(a_i)(x^{\real})_i \colon (a_1, \cdots, a_4) \in \calM},\]
which does not depend on the complex structure of each member $A_1$.
In other words, we will identify $\Lambda^{\real}_1$ to a $\ZZ$-submodule $\calM$ of $\mathbf{F}^4$.  
Note that from Equation (\ref{eq_x^re}), we may decompose $\Lambda^{\real}_1$ into a direct sum of $\calL_i := \{\Phi^{\real}(a_i)(x^{\real})_i: (a_1,\cdots,a_4) \in \calM\}$ for $i = 1,\cdots, 4$, where $\calL_i$ is a $\ZZ$-module of rank $4$ that corresponds to the $i^\text{th}$ diagonal block of the elements in $\Phi^{\real}(F)$.
We will prove that for each $i$, the block~$\calL_i$ is isomorphic to an $\ZZ$-submodule~$\calM_i$ of $R =\langle r_1, \cdots, r_4\rangle$.
Consider the $\ZZ$-submodule~$R(x^{\real})_i <~\ZZ^4$ of $\calL_i$ generated by the vectors $\Phi^{\real}(r_1)e_1 = e_1, \cdots, \Phi^{\real}(r_4)e_{13}$ after removing unnecessary zeros.
Let~$(d_1, \cdots, d_4)$ with $d_j | d_{j+1}$ be the elementary divisors of the matrix
\[ \Big(e_1~|~\Phi^{\real}(r_2)e_5~|~\Phi^{\real}(r_3)e_9~|~\Phi^{\real}(r_4)e_{13}\Big) \in M_4(\RR),
\]
and let $d = d_4$.
Then $\calL_i$ is isomorphic to the $\ZZ$-module $d\calL_i<R(x^{\real})_i$. We can therefore obtain an~$R$-module $\calM_i$ by multiplying $d\calL_i$ by $(x^{\real})_i^{-1}$ on the right. 
Furthermore, $\calM_i$ is torsion free and is isomorphic to $\calL_i$:
\[
\begin{matrix}
   \calL_i &\xrightarrow{\hspace{2em} d \cdot \hspace{2em}} &d\calL_i &\xrightarrow{\hspace{1.7em} \cdot (x^{\real})_i^{-1} \hspace{1.7em}} \calM_i < R\\
      &   &\wedge & \\
      &   &R(x^{\real})_i &    
\end{matrix}
\]
This gives us \[\Lambda^{\real}_1 \simeq \bigoplus_{i=1}^{4}\calM_i < R^4 < \mathbf{F}^4.\]
We may even identify some of these $\calM_i$'s if they are isomorphic $R$-modules.
\begin{lemma} \label{lem iso R-mods}
Let $M$ and $N$ be $R$-submodules of $\mathbf{F}$ where $\mathbf{F}\otimes_\QQ \RR \simeq \HH$. Then $M$ and $N$ are isomorphic if and only if there exists $h \in \mathbf{F}$ such that~$N = Mh$. If $M$ and $N$ are also ideals of $R$, then the isomorphism preserves the number of minimal vectors (i.e. vectors of smallest norm) in the isomorphic ideals.
\end{lemma}
\begin{proof}
The reverse implication for the first statement is clear as $R$ is torsion free.
For the forward implication: suppose $f\colon M \rightarrow N$ is an $R$-module isomorphism. 
Fix any $m \in M$, so we have $Rm < M$. Similar to the above, by considering the elementary divisors of $Rm$ in $M$, we can find an integer $d$ such that any $x \in M$ may be written as $x = rm/d$ for some $r \in R$. Now 
\[f(x) = \frac{rf(m)}{d} = \frac{rm \cdot m^{-1} \cdot f(m)}{d} = x(m^{-1} \cdot f(m))\]
where $m^{-1} \cdot f(m) \in \mathbf{F}$.

The norm in $R$ is defined as $\Nm(r) = r\Bar{r}$ for all $r \in R$. So if $x \in M$ is a minimal vector, then $xh \in N$ is a minimal vector with norm $\Nm(x)\Nm(h)$.
\end{proof}

Recall that $R$ is the order spanned by $2\fro$ and an extra generator $1$ (see Section~\ref{subsect rep End}). 
Using the ShortestVectors function in {\tt MAGMA},
 it can be shown, up to reordering the index $i$ for the modules $\calM_i$, that $\calM_1$ and $\calM_2$ have $6$ minimal vectors, while $\calM_3$ and~$\calM_4$ have $12$.
On the other hand, the $R$-modules in $\HH_\QQ$
\begin{align*}
    I_6 &:= \langle h+i, h+j, i-j, k \rangle\\
    I_{12} &:= \fro = \langle h, i, j, k\rangle
\end{align*}
have $6$ and $12$ minimal vectors respectively.
One can show that $\calM_1 \simeq \calM_2 \simeq I_6$, and $\calM_3 \simeq \calM_4 \simeq I_{12}$ by brute force.
To be specific, suppose $\calM_i$ and $I_n$ have minimal vectors $u_1, \cdots, u_n$ and $v_1, \cdots, v_n$ respectively.
Then by Lemma \ref{lem iso R-mods}, $M_i \simeq I_n$ if and only if there exists $h_{kl}:= u_k^{-1}v_l \in \HH_\QQ$ for some indices $k, l \in 1, \cdots, n$ such that right multiplication by $h_{kl}$ is a bijection between the sets of generators or minimal vectors of $\calM_i$ and $I_n$.
Summarising, $\Lambda_1$ is isomorphic to the $\ZZ$-module in $\mathbf{F}^4$
\[\calM = I_6 \oplus I_6 \oplus I_{12} \oplus I_{12}.\]

Finally, we move on to the calculation of the last attribute $\calT= \chave{t_{ij}}$ that satisfies Equation (\ref{eq_ET}).
We consider the alternating form $E$ as the pairing on $(\Lambda^{\real}_1)_\RR \simeq \RR^{16}$ given in Equation~(\ref{eq E tr}) choosing~$\alpha$ to be $(f_1+f_2)(f_3+f_4) \in \Cl^+(T)$.
Also, let~$S_{i}$ be the $4$-by-$4$ matrix representing the right multiplication of $h_{kl}$ on $\calM_i$ that gives the isomorphism~$\calM_i\simeq I_n$. 
Then $S := \diag(S_1, \cdots, S_4)$ is a matrix taking any element in $\Lambda^{\real}_1$ with respect to the generators $e_j$'s to its image in $\calM := \calM_1 \oplus \cdots \oplus \calM_4$, where each~$\calM_i$ is with respect to the basis $\{r_1, \cdots, r_4\}$ of $R$.
By identifying each $\calM_i$ with a submodule in $\HH$, there is a $4$-by-$16$ matrix $S'$ over $\HH$ such that $S'(e_j) < \HH^4$ represents the same element as $S(e_j)$.
Then~$\calT$ is the unique $4$-by-$4$ matrix such that
\[(S'_h)^t\calT \ol{S'_l} = ({M_E})_{h,l},\]
where $S'_h$ and $S'_l$ are the $h$-th and the $l$-th columns of $S'$, and $M_E$ is the matrix of $E$ with respect to the basis $\chave{e_1,\cdots, e_{16}}$ of $\Lambda^{\real}_1$.
In fact, $M_E$ is in the form
\[M_E = \begin{bmatrix}
    0 &\ast &0 & 0\\
    \ast &0 &0 &0\\
    0 &0 &0 &\ast\\
    0 &0 &\ast &0
\end{bmatrix} \in M_{16}(\RR),\]
where each asterisk represents a non-zero $4$-by-$4$ block.
This implies that the matrix $\calT$ only has four non-zero entries: $t_{1,2}, t_{2,1}, t_{3,4}$ and $t_{4,3}$.
To solve for any of these non-zero entries, say $t_{1,2}$, it is enough to consider the four equations
\[S'_{1,1}\cdot t_{1,2} \cdot S'_{2, 4+k} = (M_E)_{1,4+k} ~\text{where}~ k = 1, \cdots, 4.\]
The calculation gives
\[\calT = \begin{pmatrix} 
0 & 256 & 0 & 0 \\
-256 & 0 & 0 & 0\\
0 & 0 & 0 & -512\\
0 & 0 & 512 & 0
\end{pmatrix}.\]
Thus the elementary divisors of the polarisation of $A_1$ are $1$ and $2$ both with multiplicity $4$.
One can also check that the matrix $\calT$ is the same for all $\Lambda^{\real}_i$ for $i = 1, \cdots, 8$ up to swapping the two copies of $I_6$ (and/or the two copies of $I_{12}$) in $\calM = I_6 \oplus I_6 \oplus I_{12} \oplus I_{12}$.

Furthermore, one can compute the image of $\tiF: \calD^+_T \rightarrow \DMT \simeq \calH_4$ following \cite[Section 2]{sh}.
We have shown that the complex structure $J_1$ of $A_1$ gives the attribute $\chave{x_1, \cdots, x_4}$, which can in fact be standardised by associating it to a period matrix $X \in M_g(\CC)$. 
Write each vector $x_i$ in the form~$\prnt{u_i~|~v_i}^t$
where $u_i, v_i \in M_{4\times 1}(\CC)$, and put $U = \prnt{u_1, \cdots, u_4},~ V = \prnt{v_1, \cdots, v_4}$. 
Define a matrix
\[X := \begin{bmatrix}
    U & V \\ \ol{V} & -\ol{U}
\end{bmatrix}.\]
Upon choosing a suitable basis of $\mathbf{F}_\RR^4$ such that $\calT^{-1}$ is given by $\sqrt{-1}\cdot \Eins_4$ with respect to $\calM$,
or equivalently the complex matrix $\sqrt{-1}\rchi(\calT)^{-1}$ is in the form
$\diag(-\Eins_4, \Eins_4)$, then the $4$-by-$4$ complex matrix $Z := -V^{-1}U$ satisfies
$Z^t = -Z$ and $1-Z\ol{Z}^t > 0$. 
Furthermore by change of basis of $\CC^8$, that is by the left multiplication action of $\GL_8(\CC)$, we can assume that $V = \Eins_4$, and the period matrix $X$ is in the standardised normalised form\index{period matrix! standardised normalised}  
    \[\begin{bmatrix}
        -Z & \Eins_4\\\Eins_4 &\Bar{Z}
    \end{bmatrix}
    \]
which is unique to the attribute $\chave{x_1, \cdots, x_4}$.
As an example, we compute the image under 
$\ti{F}$ for the point $\omega := [\langle (f_1+f_2)/\sqrt{2} - i(f_3+f_4)/2 \rangle_\CC] \in \calD^+_T$.
By the above steps,
\[\ti{F}(\omega) = \begin{pmatrix}
    0 &a &0 &0\\
    -a &0 &0 &0\\
    0 &0 &0 &b\\
    0 &0 &-b &0
\end{pmatrix}\]
where \[a  = \frac{8193-128\sqrt{2}}{8191}, ~ b  = \frac{524289-1024\sqrt{2}}{524287}.\]
As a sanity check, $Z = \ti{F}(\omega)$ indeed satisfies $Z^t = -Z$ and $1-Z\ol{Z}^t > 0$.

\begin{remark}
    In practice, it is hard to determine if the element $\omega$ or its complex conjugate belongs to $\calD^+_T$.
    The sanity check therefore serves as a flag for the potential mistake of choosing $\omega \in \calD^-_T$.
\end{remark}

\section{A rank eighteen specialisation}\label{sect 18}

We will study a specialisation of the same family $\calK_P$ described in Section \ref{sect MAGMA}: the family $\calK_P$ has transcendental lattice $T= U \oplus U(2) \oplus D_4(-1)$ with generators  $\chave{f_1, \cdots, f_4, h_1, \cdots, h_4}$.
Consider the sublattice~$T' = \langle f_1, f_2, f_3, f_4 \rangle = U\oplus U(2)$ of $(T,q)$ which satisfies (\ref{assumption}).
Let $P'$ be its complement in the K3 lattice~$\Lambda_{K3}$.
It can be computed that the lattice $P'$ is given by $U \oplus E_8(-1)\oplus D_8(-1)$.
Then for any~$\omega$ in the identity component $\calD^+_{T'}$ of the period domain of weight two Hodge structures on $T'$, including the example seen at the end of Section \ref{subsect attributes}, $\ti{F}(\omega)$ is in a particularly nice form $Z(a,b)\in M_4(\CC)$
where 
\begin{align*}
    &Z(a,b)_{1,2} = -Z(a,b)_{2,1} = a;\\
    &Z(a,b)_{3,4} = -Z(a,b)_{4,3} = b;\\
    &Z(a,b)_{i,j} = 0 ~\text{if}~ (i,j) \notin \{(1,2), (2,1), (3,4), (4,3)\}.
\end{align*}
Furthermore, the condition  $1-Z\Bar{Z}^t > 0$ tells us that $|a| < 1$ and $|b| < 1$.
This gives an inclusion of the~$2$-dimensional subdomain $\ti{F}(\calD^+_{T'})$ of $\ti{F}(\calD^+_{T})$ into $\calS_1\times \calS_1$, the product of two Siegel upper-half spaces of degree $1$\index{Siegel upper half space}:
\begin{align*}
    \ti{F}(\calD^+_{T'}) &\xhookrightarrow{~~~} D_1 \times D_1 \xrightarrow[~~~]{\simeq}\calS_1\times \calS_1\\
    Z(a,b) &\longmapsto ~~~(a,b) ~~\longmapsto \prnt{f(a), f(b)}
\end{align*}
where $f$ is the conformal map taking a disc $D_1$ to $\calS_1$ by
\[x \longmapsto \frac{\sqrt{-1}(1+x)}{1-x}.\]
Therefore, we may consider $\calD^+_{T'}$ as a subset of the parametrisation space of a pair of elliptic curves.
This observation can be explained by a special geometrical property of the K3 surfaces $X' \in \calK_P$ polarised by $P' \supseteq P$ that associates $X'$ with an abelian surface which can be decomposed into the product of the pair of elliptic curves represented by $\tiF(X')$.
\begin{definition}{\em \cite[Definition 6.1]{mo1}}\label{Def SI}

    A K3 surface $X$ is said to admit a {\em Shioda-Inose structure} associated to an abelian surface $A$ if there is a symplectic involution $\iota$ on $X$ such that the Kummer surface $Y = \Kum(A)$ is the minimal resolution of $X/\langle \iota \rangle$, 
    and if the associated rational double cover $ \pi_X\colon X \dasharrow Y$ induces a Hodge isometry~$(\pi_X)_*\colon T_X(2) \rightarrow T_Y$, where $T_X$ and $T_Y$ are the transcendental lattices of $X$ and $Y$ respectively.
    \end{definition}
There is a lattice theoretic criterion for $X'$ to carry a Shioda-Inose structure \cite[Corollary 2.6]{mo1}: for $X'$ of Picard rank $18$, it is enough to check that its transcendental lattice contains a copy of $U$ as a summand, which is indeed the case here. 

Let us denote by $\KS(X') = \KS(T)$ and $\KS(T')$ the different KS varieties constructed from the weight two Hodge structures on $T$ and $T'$. We will prove the following theorem.

\begin{theorem}\label{thm main}
    Consider the lattices 
    \[P = U \oplus D_8(-1) \oplus D_4(-1) \quad \text{and} \quad P' = U \oplus E_8(-1) \oplus D_8(-1).\]
    Suppose $X' \in \calK_P$ is polarised by $P' \supseteq P$. Let $A_1$ be the image of $X'$ under the modular mapping $F$ constructed in Theorem~\ref{thm F}.
    If~$X'$ is a very general $P'$-polarised K3 surface, that is if $\Pic(X') = P'$, then $A_1$ is isogenous to $E_1^4 \times E_2^4$, where $E_1$ and $E_2$ are two non-isogenous elliptic curves.
\end{theorem}

Before proceeding, let us recall some properties of $\KS(T)$ and $\KS(T')$ due to \cite{mo2}. There, he generalises the Kuga-Satake construction by allowing the input to be any triple $(X, N, \eta)$, where $X$ is a compact complex K\"ahler surface with $\dim H_\CC^{2,0}(X)=1$, $N$ is any subgroup of $H^2(X,\ZZ)$ and $\eta$ is a non-zero element in $H_{4}(X,\ZZ)$.
In particular, when $X$ is a K3 surface, then $\KS(T)$ in our notation coincides with $\KS((X,T^\perp_{H^2(X,\ZZ)} ,[X]))$ in \cite{mo2}.
\begin{lemma}\label{lem mo KS}{\em \cite[Sections 4.4 and 4.7]{mo2}}
    \vspace{-0.5em}
    \begin{enumerate}[(i)]
        \item 
Let $X$ be a K3 surface with transcendental lattice $T$. Let $T', T''$ be lattices such that $T \subset T' \subset T'' \subset H^2(X,\ZZ)$, and let $d = \dim_\QQ((T''/T')\otimes \QQ)$.
Then
\[\KS(T'') \sim \KS(T')^{2^d}.\]

        \item Let $X$ be a K3 surface with  a Shioda-Inose structure \index{Shioda-Inose structure}associated to an abelian surface $A$.
Then
\[\KS(H^2(X,\ZZ)) \sim A^{2^{19}}.\]
    \end{enumerate}
\end{lemma}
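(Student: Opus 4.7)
The plan is to combine Lemma \ref{lem mo KS} with a detailed analysis of the idempotent $\varepsilon_1$ under the Clifford splitting given by Lemma \ref{lem glueCl}.

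First I would observe that since $X'$ is very general with $\Pic(X') = P'$, its transcendental lattice is $T_{X'} = T' = U \oplus U(2)$, which contains $U$ as a summand. The Shioda-Inose criterion \cite[Corollary 2.6]{mo1} then produces an abelian surface $A$ with $T_A \cong T'(2)$. Because the Mumford-Tate group of $T'_\QQ$ attains its generic value $\SO(T'_\QQ)$, the algebra $\End_\QQ(A)$ is forced to equal $\QQ \times \QQ$, giving $A \sim E_1 \times E_2$ for non-isogenous elliptic curves $E_1, E_2$ without CM. Applying Lemma \ref{lem mo KS}(ii) yields $\KS(H^2(X',\ZZ)) \sim A^{2^{19}} \sim E_1^{2^{19}} \times E_2^{2^{19}}$, and two applications of Lemma \ref{lem mo KS}(i) (once to $T' \subset H^2(X',\ZZ)$, once to $T' \subset T$) then give $\KS(X') = \KS(T) \sim E_1^{32} \times E_2^{32}$.

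The main obstacle -- and the crux of the proof -- is to pin down the distribution of the simple factors $E_1, E_2$ inside the $8$-fold $A_1$, since dimension considerations alone permit any splitting of the form $E_1^a \times E_2^{8-a}$. For this I would use the key observation that $T^{2,0}_{X'} \subset T'_\CC$, so the Weil operator $J$ lies in $\Cl^+(T') \hookrightarrow \Cl^+(T)$. Under the Clifford splitting
\[
\Cl^+(T) \simeq \Cl^+(T') \hotimes \Cl^+(D_4(-1)) \,\oplus\, \Cl^-(T') \hotimes \Cl^-(D_4(-1)),
\]
both left-multiplication by $J$ (which supplies the complex structure) and right-multiplication by $\varepsilon_1 = x_1 y_1/32$, with $x_1 \in \Cl^+(T')$ and $y_1 \in \Cl^+(D_4(-1))$, preserve each summand separately. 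This decomposes $A_1$ into two abelian sub-$4$-folds, one in each summand.

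Finally, I would identify the two contributions. On the first summand, $\Cl^+(T') \cdot x_1$ is a simple left $\Cl^+(T')$-submodule which, via the identification $\End_\QQ(\KS(T'))^{\mathrm{op}} \simeq \Cl^+(T'_\QQ) \simeq M_2(\QQ) \oplus M_2(\QQ)$ matched with $\KS(T') \sim E_1^2 \times E_2^2$, corresponds to one of the elliptic factors, say $E_1$; tensoring with $\Cl^+(D_4(-1)) \cdot y_1$ of $\ZZ$-rank $4$ gives a contribution isogenous to $E_1^4$. On the second summand, the parallel computation in $\Cl^-(T')$ is subtly different: right-multiplication by $x_1$ picks out exactly the entries lying in the submodule isotypic to the \emph{other} simple factor of $\Cl^+(T'_\QQ)$, yielding a contribution isogenous to $E_2^4$. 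Summing the two contributions gives $A_1 \sim E_1^4 \times E_2^4$, and the non-isogeny $E_1 \not\sim E_2$ is inherited from the generic structure of $A$.
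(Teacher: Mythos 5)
Your proposal does not address the stated lemma at all. Lemma~\ref{lem mo KS} is a quoted result of Morrison, and the paper's treatment of it consists of a single sentence pointing the reader to \cite[Section 4.7]{mo2} for part~(ii) and to \cite[Remark 2.4]{vv} for part~(i); no internal proof is given. What you have written is instead an argument for Theorem~\ref{thm main} (the rank-eighteen specialisation result $A_1 \sim E_1^4 \times E_2^4$). This is visible immediately: your opening line says ``The plan is to combine Lemma~\ref{lem mo KS} with a detailed analysis of the idempotent $\varepsilon_1$,'' and you later invoke ``Lemma~\ref{lem mo KS}(ii)'' and ``two applications of Lemma~\ref{lem mo KS}(i)'' as black boxes. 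One cannot prove a lemma by assuming it.

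If the goal is to supply a self-contained proof of Lemma~\ref{lem mo KS}, the relevant ingredients are quite different. Part~(i) rests on the tensor decomposition $\Cl^+(T'' ) \simeq \Cl(T') \hotimes \Cl^+(T'/T'')$ as a $\Cl^+(T')$-module (the approach sketched in \cite[Remark 2.4]{vv}), which exhibits $\Cl^+(T'')$ as $2^d$ copies of $\Cl^+(T')\,\oplus\,\Cl^-(T')$ and then uses the isogeny of Remark~\ref{rmk Cl+ = Cl-} relating the $\Cl^+$- and $\Cl^-$-tori. Part~(ii) additionally uses the key fact, from the definition of a Shioda--Inose structure, that the double cover $X \dashrightarrow \Kum(A)$ induces a Hodge isometry $T_X(2) \simeq T_{\Kum(A)} \simeq T_A(2)$, reducing the claim to the abelian-surface case $\KS(T_A) \sim A^2$, which Morrison computes directly. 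None of this appears in your proposal. Conversely, the content you did write is genuinely relevant to the paper --- but as a (partial, alternative) proof of Theorem~\ref{thm main}, not of the lemma you were asked about.
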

The proof and explanation of Lemma \ref{lem mo KS}(ii) in \cite[Section 4.7]{mo2} depend on the statement in part~(i), which is explained in \cite[Remark 2.4]{vv}.

The next step of the proof of Theorem \ref{thm main} involves the following lemma.
\begin{lemma}\label{lem E1E2}
    Suppose $X' \in \calK_P$ is very general.
    Let
    $\KS(X') = \KS(T) \sim A_1 \times \cdots \times A_8$
    be the decomposition of the KS variety described in Section \ref{subsect ex}.
   Then there exist elliptic curves $E_1, E_2$ and an integer $k$ satisfying $0 < k < 8$ such that
    \[A_i \sim E_1^k\times E_2^{8-k} \text{ for all } i = 1, \cdots, 4 \quad \text{ and } \quad A_i \sim E_2^k\times E_1^{8-k} \text{ for all } i = 5, \cdots, 8.\]
\end{lemma}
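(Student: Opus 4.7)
The plan is to relate $\KS(T)=\KS(X')$ to the Shioda--Inose partner $A$ of $X'$ via Lemma~\ref{lem mo KS}, and then to decompose $A$ up to isogeny as a product $E_1\times E_2$ by exploiting the embedding $\tiF(\calD^+_{T'})\hookrightarrow \calS_1\times\calS_1$ established at the start of this section.

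Since $X'$ is very general with $\Pic(X')=P'$, its transcendental lattice is $T_{X'}=T'=U\oplus U(2)$, which contains $U$ as a direct summand. The standard lattice-theoretic criterion for Shioda--Inose structures (\cite[Corollary~2.6]{mo1}) then produces an abelian surface $A$ with $T_A\simeq T_{X'}$. Applying Lemma~\ref{lem mo KS}(i) to the chains $T_{X'}\subset T\subset H^2(X',\ZZ)$ and $T_{X'}\subset H^2(X',\ZZ)$, together with Lemma~\ref{lem mo KS}(ii), I would obtain
\[
\KS(T)\sim \KS(T')^{2^{4}}, \qquad \KS(T')^{2^{18}}\sim \KS(H^2(X',\ZZ))\sim A^{2^{19}}.
\]
Poincar\'e's complete reducibility (Theorem~\ref{thm poincare}) and the uniqueness of the decomposition into simple factors then force $\KS(T')\sim A^{2}$, and hence $\KS(T)\sim A^{32}$.

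Next, I would identify $A$ up to isogeny as a product of two non-isogenous elliptic curves. Under $\tiF$ the Hodge structure on $T_A\simeq T'$ corresponds to a point of $\calS_1\times\calS_1$, i.e.\ to a pair of elliptic curves $(E_1,E_2)$. Since the weight two Hodge structure on $H^1(E_1)\otimes H^1(E_2)\subset H^2(E_1\times E_2)$ matches that on $T_A\subset H^2(A)$, the Torelli theorem for abelian surfaces gives $A\sim E_1\times E_2$. Very generality of $X'$ forces $E_1$ and $E_2$ to be non-CM and mutually non-isogenous, hence simple and non-isogenous. Combining with the previous step yields $\KS(T)\sim E_1^{32}\times E_2^{32}$; since $\KS(T)\sim A_1\times\cdots\times A_8$ with each $A_i$ an abelian $8$-fold by the construction of Section~\ref{sect F}, the uniqueness of the simple decomposition (Theorem~\ref{thm poincare}) forces each $A_i\sim E_1^{k_i}\times E_2^{8-k_i}$ for some $0\le k_i\le 8$, with $\sum_{i=1}^{8} k_i=32$.

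The main obstacle will be the period comparison in the second step. The map $\tiF$ is defined via the Clifford algebra and the half-spin representation, not directly in terms of $A$, so producing a canonical isogeny (rather than a mere coincidence of period data) between $A$ and $E_1\times E_2$ takes some care. One concrete route is to argue purely lattice-theoretically: the decomposition $T_A=U\oplus U(2)$ into two orthogonal rank~$2$ hyperbolic summands already guarantees that $A$ is isogenous to a product of two elliptic curves, and the genericity of $X'$ then rules out any isogeny or complex multiplication between the two factors.
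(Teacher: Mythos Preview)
Your overall architecture matches the paper's exactly: reduce to $\KS(T')\sim (A')^{2}$ via Lemma~\ref{lem mo KS}, split $A'$ (equivalently $\KS(T')$) into two elliptic-curve factors, and finish by Poincar\'e uniqueness. The only divergence is in how that splitting is obtained.

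Your primary route through $\tiF$ does not work as stated, for the reason you already flag. The embedding $\tiF(\calD^+_{T'})\hookrightarrow\calS_1\times\calS_1$ from the start of Section~\ref{sect 18} is a statement about the period of the abelian $8$-fold $A_1=F(X')$, not about the Shioda--Inose surface $A'$; the pair $(E_1,E_2)$ it produces is a priori just a coordinate on $\DMT$, and identifying it with the Hodge structure on $T_{A'}$ is essentially the content of Theorem~\ref{thm main}, which in turn relies on the present lemma. Your fallback ``concrete route'' is the right instinct, but the stated justification---that the orthogonal decomposition $T_{A'}=U\oplus U(2)$ into hyperbolic planes forces $A'$ to split---is not a correct criterion as phrased; what one actually needs is that $\NS(A')$ represents zero, so you would have to compute $\NS(A')\simeq U(2)$ first (cf.\ Remark~\ref{A'}) rather than argue from $T_{A'}$.

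The paper sidesteps both issues by staying inside the Clifford-algebra framework that drives the whole article: from $\Cl^+(T'_\RR)\simeq M_2(\RR)^{\oplus 2}$ and the Mumford--Tate argument of Theorem~\ref{thm decomposeKS} applied verbatim to $T'$ in place of $T$, one reads off $\KS(T')\sim E_1^2\times E_2^2$ with $E_1\not\sim E_2$ directly, without ever invoking $A'$ or its N\'eron--Severi lattice. This is shorter and uniform with the rest of the paper; your approach, once repaired via $\NS(A')$, has the virtue of making the geometry of $A'$ explicit, which is precisely what the paper records separately in Remark~\ref{A'}.
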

\begin{proof}
    Let $X'$ be a K3 surface whose transcendental lattice is exactly the rank $4$ lattice $T' \subset T \subset \Lambda_{K3}$.
    By Lemma \ref{lem mo KS}(i),
    we have $\KS(T) \sim \KS(T')^{2^4}$.
    
    On the other hand, recall the K3 surface $X'$ has a Shioda-Inose structure associated to an abelian surface $A'$. 
    From Lemma \ref{lem mo KS}(i), we have $\KS(T')^{2^{18}} \sim \KS(H^2(X',\ZZ))$, 
    and from Lemma \ref{lem mo KS}(ii), we have $\KS(H^2(X',\ZZ)) \sim (A')^{2^{19}}$.
    Futhermore by applying the Poincar\'e's Complete Reducibility Theorem,  we have 
    $\KS(T') \sim (A')^{2}$.

    Finally, from \cite[Table 11.5]{harv}, we have
    $\Cl^+(T'_\RR) \simeq M_2(\RR)^{\oplus 2}$.
    Since assumption (\ref{assumption2}) applies for $T'$,
    Theorem \ref{thm decomposeKS}  implies the decomposition  $\KS(T') \sim (E_1 \times E_2)^2$, where $E_1$ and $E_2$ are non-isogenous elliptic curves.
    Combining all statements,
    this gives $A_i \sim E_1^k\times E_2^{8-k}$.
    Moreover, four subvarieties in the decomposition of $\KS(X')$ described in Theorem \ref{thm decomposeKS} are isogenous to $E_1^k\times E_2^{8-k}$, and the other four are isogenous to $E_1^{8-k}\times E_2^{k}$.
    Note that we can eliminate the cases $k = 0$ or $k = 8$, as the family of $A_i$ is two dimensional.
\end{proof}

\begin{remark}\label{A'}
    Since $A'$ has transcendental lattice $U\oplus U(2)$, its Picard lattice is given by $U(2)$, which suggests that
    \[A' \simeq (E_1\times E_2)/\{(P,Q)\} \sim E_1\times E_2,\]
    where $P\in E_1[2]$ and $Q\in E_2[2]$ are 2-torsion points in the elliptic curves $E_1$ and $E_2$ respectively.
\end{remark}

To prove Theorem \ref{thm main}, it remains to show that $k=4$ in the above statement.
\begin{proof}[Proof of Theorem \ref{thm main}]

Let $(T')^\perp$ be the sublattice in $T$ such that $T = T' \oplus (T')^\perp$. $\ie $ $(T')^\perp = D_4(-1)$.
We recall in Section \ref{subsect ex} that pulling back each pseudo-idempotent $32\varepsilon_i$ along the gluing map 
\[\Cl^+(T') \otimes \Cl^+((T')^\perp) \longrightarrow \Cl^+(T)\]
is the tensor product $x_j \otimes y_k$ for some $j, k$ depending on $i$.
Then by the same reasoning as in the proof of Lemma \ref{lem mo KS}(i) provided in \cite[Remark 2.4]{vv}, we have
\begin{align*}
    \Lambda'_1 \simeq \Cl^+(T)\cdot (32\varepsilon_1) &\simeq \left ( \prnt{\Cl^+(T') \cdot x_j} \otimes \prnt{\Cl^+((T')^\perp)\cdot y_k} \oplus 
    \prnt{\Cl^-(T') \cdot x_j} \otimes \prnt{\Cl^-((T')^\perp)\cdot y_k }\right )\\
    &\simeq 4\prnt{\prnt{\Cl^+(T')\cdot x_j} \oplus \prnt{\Cl^-(T')\cdot x_j}}.
\end{align*}
The second isomorphism comes from the fact that under the algebra isomorphism $\Cl((T'_\RR)^\perp) \rightarrow~M_2(\HH)$, the images of both $\prnt{\Cl{}^+((T')^\perp)\cdot y_k}$ and $\prnt{\Cl{}^-((T')^\perp)\cdot y_k}$ are rank $4$ lattices over $\ZZ$. 

On the other hand, $x_1, \cdots, x_4$ are pseudo-idempotents of $\Cl^+(T')$ by definition.
Similarly by studying the algebra isomorphism $\Cl(T'_\RR) \rightarrow M_2(\RR)^{\oplus 2}$, the lattices $\Cl^+(T')\cdot x_i$ and $\Cl^-(T')\cdot x_i$ are both of rank $2$ over $\ZZ$.
Therefore, they respectively correspond to an elliptic curve $E^+_i$ and $E^-_i$ in the simple decomposition of $\KS(T')$.
This implies $k = 4$.
\end{proof}

Theorem \ref{thm main} implies that $\calD^+_{T'}$ cuts out a special locus in $\calD^+_T$ whose image under $\ti{F}$ corresponds to non-simple abelian $8$-folds which are products in the form of $E_1^4 \times E_2^4$, where $E_1$ and $E_2$ are generically non-isogenous.
Also, we have $A_1 \sim \cdots \sim A_8$. 

\begin{remark}
We can similarly find a $2$-dimensional locus in $\calD^+_T$ for all even, indefinite $2$-elementary transcendental lattices $T$ of rank $8$. 
By \cite{cm2}, such $T$ has a summand of $T' = U\oplus U$, $U\oplus U(2)$ or~$U(2)\oplus U(2)$.
By \cite{mo1} and \cite{mo2}, this implies that $X'$ has a Shioda-Inose structure for the first two cases, or it is a Kummer surface $\Kum(A)$ with $\NS(A) = U$ and
$\KS(X) \sim (A\times A^\vee)^{2^4} \sim A^{2^5}$ in the third case.
In all cases, it is easy to check that all the arguments in the proof of Lemma~\ref{lem E1E2} apply, as they only depend on the rank and the signature of the sublattice $T'$ in $T$.
The proof of Theorem~\ref{thm main} also works nicely: 
if we choose the pseudo-idempotents~$x_1, \cdots, x_4$ in $\Cl(T')$ such that their images under the homomorphism $\Cl(T') \rightarrow M_4(\QQ)$ are some integral multiples of $E_{1,1}$ up to $E_{4,4}$, then  $\Cl^+(T')\cdot x_i$ and $\Cl^-(T')\cdot x_i$ are both of rank $2$ over $\ZZ$ and correspond to two non-isogeneous elliptic curves $E_1$ and $E_2$.
Therefore in both cases, for all~$A_1$ parametrised by $\ti{F}(\calD^+_{T'})$, we  again have the decomposition $A_1 \sim E_1^4 \times E_2^4$.
\end{remark}

\begin{remark}
    An alternative way of proving Theorem~\ref{thm main} suggested by the referee is as follows.
    Take any lattice $T''$ of rank $7$ with $T' < T'' < T$.
    We have, by \cite[Theorem 7.7, (2)]{vg}, \[\Cl^+(T''_\QQ) \simeq M_4(D),\]
    where $D$ is a quaternion algebra over $\QQ$.
    It follows that $\KS(T'')\sim B^4$, where $B$ is an abelian $8$-fold considering that the dimension of $\KS(T'')$ is $32$.
    The abelian variety $B$ is simple if and only if $D \not\cong M_2(\QQ)$, and if $B$ is not simple, then $B = (B')^2$ for a simple abelian $4$-fold $B'$.
    On the other hand, Lemma \ref{lem mo KS}(i) implies $\KS(T'')^2 \sim \KS(T)$, so we have $B^8 \sim A_1 \times A_2 \times \cdots \times A_8$, and so $A_i \sim B$ for any $i$.
    Combined with the result of Lemma~\ref{lem E1E2}, we have $A_1 \sim E_1^4 \times E_2^4$.
\end{remark}

\end{document}